\providecommand{\U}[1]{\protect \rule{.1in}{.1in}}
\newtheorem{theorem}{Theorem}
\newtheorem{corollary}{Corollary}
\newtheorem{lemma}{Lemma}
\newtheorem{proposition}{Proposition}
\newtheorem{remark}{Remark}
\newenvironment{proof}[1][Proof]{\noindent \textbf{#1.} }{\  \rule{0.5em}{0.5em}}
\begin{document}
% paper title
% can use linebreaks \\ within to get better formatting as desired
\title{Stability Analysis of Integral Delay Systems with Multiple Delays}
\author{Bin Zhou\thanks{Bin Zhou is with the Center for Control Theory and Guidance Technology, Harbin
Institute of Technology, Harbin, 150001, China. Email:
\texttt{binzhou@hit.edu.cn; binzhoulee@163.com. }} \quad \quad \quad Zhao-Yan Li\thanks{Zhao-Yan Li is with the Department of Mathematics, Harbin Institute
of Technology, Harbin, 150001, China. Email: \texttt{zhaoyanlee@gmail.com;
lizhaoyan@hit.edu.cn.}}}
\date{}
\maketitle
\begin{abstract}
%\boldmath
This note is concerned with stability analysis of integral delay systems with
multiple delays. To study this problem, the well-known Jensen inequality is
generalized to the case of multiple terms by introducing an individual slack
weighting matrix for each term, which can be optimized to reduce the
conservatism. With the help of the multiple Jensen inequalities and by
developing a novel linearizing technique, two novel Lyapunov functional based
approaches are established to obtain sufficient stability conditions expressed
by linear matrix inequalities (LMIs). It is shown that these new conditions
are always less conservative than the existing ones. Moreover, by the positive
operator theory, a single LMI based condition and a spectral radius based
condition are obtained based on an existing sufficient stability condition
expressed by coupled LMIs. A numerical example illustrates the effectiveness
of the proposed approaches.

\textbf{Keywords:} Stability of integral delay systems, Multiple Jensen
inequality, Linearization technique, Positive operator theory, Spectral radius
\end{abstract}
% IEEEtran.cls defaults to using nonbold math in the Abstract.
% This preserves the distinction between vectors and scalars. However,
% if the journal you are submitting to favors bold math in the abstract,
% then you can use LaTeX's standard command \boldmath at the very start
% of the abstract to achieve this. Many IEEE journals frown on math
% in the abstract anyway.

% Note that keywords are not normally used for peerreview papers.
%\begin{IEEEkeywords}
\section{Introduction}

An integral delay system (IDS) in the form of $x\left(  t\right)  =\int
_{-\tau}^{0}F\left(  s\right)  x\left(  t+s\right)  \mathrm{d}s,$ where
$F\left(  s\right)  \ $is a matrix function with bounded variation, has many
important applications in the study of time-delay systems (see, for example,
{\cite{gil14sma}}, \cite{hale77book,zld12auto} and \cite{zhou14book}). In
Hale's book \cite{hale77book} this class of IDSs were named as $D$ operators,
which were also treated as generalized difference equations there, and their
stability is {necessary} for the stability of the associated neutral
time-delay systems. This class of IDSs also come from the model reduction
approach for stability analysis of time-delay systems, which are frequently
named as the additional dynamics (see \cite{gn01tac} and \cite{km03tac}) and
their stability is necessary for the stability of the transformed time-delay
systems. This class of IDSs are very closely related with the predictor
feedback control of linear systems with input delays, for example,

\begin{itemize}
\item In \cite{gu12isrn}, the author proved that the numerical implementation
of the predictor feedback for linear systems with input delays is safe only if
an IDS is exponentially stable.

\item In \cite{lzl14auto}\ it is shown that the stability of this class of
IDSs is necessary for the robust stability of linear systems with input delay
by the well-known predictor feedback.

\item In \cite{zhou14auto} we have shown that an input delayed linear system
by the so-called pseudo-predictor feedback is exponentially stable if and only
if an IDS is exponentially stable.
\end{itemize}

Stability analysis of IDSs can be traced back at least to Cruz and Hale
\cite{rh70jde}, Henry \cite{herry70jde} and Melvin \cite{melvn74jaa}, in the
study of the stability of neutral time-delay systems \cite{hale77book}. When
the right hand side of the IDS only contains terms at some time points, a
general theory was build in \cite{carvalho96laa}. This class of IDSs have
received renewed interest in recent years. A general stability theorem was
build in \cite{dll10ijrnc} and was later applied on different forms of IDSs
(see \cite{lzl13auto}, \cite{Aguliar10AMC}, \cite{mm12tac}, and the references
therein). {In the paper \cite{gil14book} stability conditions are
derived for IDS with matrix discrete-continuous measures.}

In this note we also study stability analysis of this class of IDSs in which
$F\left(  s\right)  $ is a piecewise constant matrix function, namely, the
system can be expressed as $x\left(  t\right)  =%
%TCIMACRO{\tsum \nolimits_{i=1}^{N}}%
%BeginExpansion
{\textstyle \sum \nolimits_{i=1}^{N}}
%EndExpansion
A_{i}\int_{-\tau_{i}}^{0}x\left(  t+s\right)  \mathrm{d}s$ where $A_{i}$ and
$\tau_{i}$ are constants ($\tau_{i}$ can be unknown, time-varying, yet
bounded). This class of IDSs with multiple time delays have been investigated
in \cite{Aguliar10AMC} with the help of the well-known discrete-time and
continuous-time Jensen inequalities. In this note, by recognizing that the
jointed usage of the conventional discrete-time and continuous-time Jensen
inequalities requires that all the integration terms $\int_{-\tau_{i}}%
^{0}x^{\mathrm{T}}\left(  t+s\right)  A_{i}^{\mathrm{T}}QA_{i}x\left(
t+s\right)  \mathrm{d}s$ share the same weighting matrix $Q,$ we first
establish a so-called multiple Jensen inequality, by which as well as some
novel Lyapunov functionals and a new linearization technique, every individual
integration term possesses a different weighting matrix, which can introduce
more weighting matrices that will be optimized to reduce the conservatism of
the corresponding sufficient stability conditions. Indeed, it is shown in both
theory and by numerical examples that the stability conditions obtained by the
multiple Jensen inequalities are always less conservative than that obtained
by the jointed usage of the conventional discrete-time and continuous-time
Jensen inequalities. Another contribution of this note is that, with the help
of the positive operator theory, we are able to establish an equivalent linear
matrix inequality (LMI) based stability condition involving only \textit{one
constraint} and \textit{one decision matrix} and an equivalent spectral radius
based stability condition of some existing stability conditions that are
expressed by \textit{a set of coupled LMIs}. These two equivalent stability
conditions are appealing in both theory and in computation.

The remaining of this note is organized as follows. The problem formulation
and some preliminaries are given in Section \ref{sec1}. In Section \ref{sec2},
two kinds of LMIs based sufficient conditions are established with the help of
the multiple Jensen inequalities, some novel Lyapunov functionals and a new
linearization technique. A spectral radius based stability condition and a
single LMI based stability condition are then established in Section
\ref{sec3} in which\ a comparison of the proposed approaches and the existing
one will also be carried out. A numerical example is worked out in Section
\ref{sec4} to illustrate the effectiveness of the proposed approaches and
finally Section \ref{sec5} concludes the note.

\section{\label{sec1}Problem Formulation and Preliminaries}

Consider the following integral delay system (IDS) with multiple delays%
\begin{equation}
x\left(  t\right)  =\sum \limits_{i=1}^{N}A_{i}\int_{-\tau_{i}}^{0}x\left(
t+s\right)  \mathrm{d}s, \label{ids}%
\end{equation}
where $A_{i}\in \mathbf{R}^{n\times n},i\in \mathbf{I}\left[  1,N\right]
\triangleq \{1,2,\ldots,N\},$ are given matrices and $\tau_{i},i\in
\mathbf{I}\left[  1,N\right]  ,$ are given scalars and are such that{
\begin{equation}
0<\tau_{i}\leq \tau \triangleq \max_{i\in \mathbf{I}\left[  1,N\right]  }\{
\tau_{i}\},\;i\in \mathbf{I}\left[  1,N\right]. \label{eq11}%
\end{equation}}
Let $\varphi \in \mathscr{C}_{n,\tau}$ be an
initial condition for (\ref{ids}) and $x\left(  t\right)  =x\left(
t,\varphi \right)  ,\forall t\geq0$ be the corresponding solution of
(\ref{ids}) satisfying $x\left(  t\right)  =\varphi \left(  t\right)  ,\forall
t\in {\lbrack-\tau,0)}.$ Here $\mathscr{C}_{n,\tau}$ denotes the Banach space of
continuous vector functions mapping the interval $\left[  -\tau,0\right]  $
into $\mathbf{R}^{n}$ with the topology of uniform convergence. We say that
the IDS (\ref{ids}) is exponentially stable if there exist two positive
constants $\alpha$ and $\beta$ such that $\left \Vert x\left(  t\right)
\right \Vert \leq \alpha \sup_{s\in {\lbrack-\tau,0)}}\left \Vert \varphi \left(
s\right)  \right \Vert \mathrm{e}^{-\beta t},\forall t\geq0.$

{The IDS (\ref{ids}) arises when some transformations are made on
differential-difference systems \cite{Aguliar10AMC}}. In this note we
are concerned with the stability analysis of the IDS (\ref{ids}). By choosing
some suitable Lyapunov functionals and developing a new linearization
technique for handling nonlinear matrix inequalities, we will establish two
classes of LMIs based sufficient conditions guaranteeing the exponential
stability of the IDS (\ref{ids}). Moreover, with the help of the positive
operator theory, we will also provide a spectral radius based sufficient
stability condition. The relationships among these different sufficient
conditions are also revealed. {Our results improve those in
\cite{Aguliar10AMC}}. Both theoretical analysis and numerical examples will
demonstrate that the obtained results are always less conservative and more
efficient than the existing ones {especially those in
\cite{Aguliar10AMC}}.

The following general Lyapunov stability theorem for the IDS (\ref{ids}) will
be used later in this note.

\begin{lemma}
\label{lm0}\cite{Aguliar10AMC} The IDS (\ref{ids}) is exponentially stable if
there exists a differentiable functional $V:\mathscr{C}_{n,\tau}%
\rightarrow \mathbf{R}$ and three positive constants $\alpha_{i},i=1,2,3,$ such
that
\begin{align}
\alpha_{1}\int_{-\tau}^{0}\left \Vert x\left(  t+\theta \right)  \right \Vert
^{2}\mathrm{d}\theta &  \leq V\left(  x_{t}\right)  \leq \alpha_{2}\int_{-\tau
}^{0}\left \Vert x\left(  t+\theta \right)  \right \Vert ^{2}\mathrm{d}%
\theta,\label{eq64}\\
\dot{V}\left(  x_{t}\right)   &  \leq-\alpha_{3}\int_{-\tau}^{0}\left \Vert
x\left(  t+\theta \right)  \right \Vert ^{2}\mathrm{d}\theta. \label{eq65}%
\end{align}

\end{lemma}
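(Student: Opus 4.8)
The plan is to recognize the three hypotheses as the statement that $V$ is, up to constants, equivalent to the ``energy'' $W(t)\triangleq\int_{-\tau}^{0}\|x(t+\theta)\|^{2}\,\mathrm{d}\theta$, and that this energy is dissipated at a rate proportional to itself. First I would combine the upper bound $V(x_{t})\le\alpha_{2}W(t)$ from (\ref{eq64}) with the dissipation inequality (\ref{eq65}) to eliminate $W(t)$ and obtain the autonomous scalar differential inequality $\dot{V}(x_{t})\le-(\alpha_{3}/\alpha_{2})V(x_{t})$.

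Next, integrating this inequality (the comparison lemma) gives $V(x_{t})\le V(x_{0})\mathrm{e}^{-(\alpha_{3}/\alpha_{2})t}$ for all $t\ge0$. Feeding this back into the two-sided bound (\ref{eq64}) — the lower bound $\alpha_{1}W(t)\le V(x_{t})$ on the left and the upper bound $V(x_{0})\le\alpha_{2}W(0)$ on the right — transfers the exponential decay to the energy itself:
\[
W(t)\le\frac{\alpha_{2}}{\alpha_{1}}W(0)\,\mathrm{e}^{-(\alpha_{3}/\alpha_{2})t},\qquad t\ge0.
\]
Since $W(0)=\int_{-\tau}^{0}\|\varphi(s)\|^{2}\,\mathrm{d}s\le\tau\sup_{s\in[-\tau,0)}\|\varphi(s)\|^{2}$, the energy decays exponentially in terms of the initial data.

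The main obstacle is that exponential stability is defined through the pointwise norm $\|x(t)\|$, whereas the Lyapunov argument only controls the integral (i.e.\ $L^{2}$-averaged) quantity $W(t)$; for a general functional equation this gap need not close. Here it is bridged by exploiting the special integral structure of (\ref{ids}). Applying the Cauchy--Schwarz inequality termwise to (\ref{ids}) and using $\tau_{i}\le\tau$ yields
\[
\|x(t)\|\le\sum_{i=1}^{N}\|A_{i}\|\int_{-\tau_{i}}^{0}\|x(t+s)\|\,\mathrm{d}s\le\Big(\sqrt{\tau}\sum_{i=1}^{N}\|A_{i}\|\Big)W(t)^{1/2},\qquad t\ge0.
\]
Combining this with the decay of $W(t)$ and the bound on $W(0)$ produces the required estimate $\|x(t)\|\le\alpha\sup_{s\in[-\tau,0)}\|\varphi(s)\|\,\mathrm{e}^{-\beta t}$ with the explicit constants $\beta=\alpha_{3}/(2\alpha_{2})$ and $\alpha=\tau\sqrt{\alpha_{2}/\alpha_{1}}\sum_{i=1}^{N}\|A_{i}\|$, which completes the proof.
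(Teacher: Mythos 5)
The paper does not prove this lemma itself --- it is quoted from \cite{Aguliar10AMC} --- so there is no internal proof to compare against; your argument is correct and is precisely the standard one used in that reference. In particular, the two essential steps --- converting (\ref{eq64})--(\ref{eq65}) into $\dot{V}\leq-(\alpha_{3}/\alpha_{2})V$ and integrating, and then bridging the resulting $L^{2}$-type decay of $\int_{-\tau}^{0}\|x(t+\theta)\|^{2}\mathrm{d}\theta$ to a pointwise bound on $\|x(t)\|$ by applying Cauchy--Schwarz to the right-hand side of (\ref{ids}) --- are exactly the right ones, and your explicit constants $\alpha=\tau\sqrt{\alpha_{2}/\alpha_{1}}\sum_{i=1}^{N}\|A_{i}\|$ and $\beta=\alpha_{3}/(2\alpha_{2})$ check out.
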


At the end of this section, we give the following technical lemma which is
helpful for the linearization of nonlinear matrix inequalities in the sequel.

\begin{lemma}
\label{lm2}Let $S\in \mathbf{R}^{n\times n}$ and $Q\in \mathbf{R}^{n\times n}$
be two positive definite matrices. Then $Q<S^{-1}$ if and only if there exists
a matrix $R\in \mathbf{R}^{n\times n}$ such that%
\begin{equation}
R^{\mathrm{T}}QR+S-\left(  R+R^{\mathrm{T}}\right)  <0. \label{eq80}%
\end{equation}
The same statements hold true if \textquotedblleft%
%TCIMACRO{\TEXTsymbol{<}}%
%BeginExpansion
$<$%
%EndExpansion
\textquotedblright \ in the above two inequalities are replaced by
\textquotedblleft$\leq$\textquotedblright.
\end{lemma}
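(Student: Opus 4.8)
The plan is to reduce the whole equivalence to a single completion-of-the-square identity together with the order-reversing property of matrix inversion on the positive definite cone. As a preliminary observation I would record that, since $Q$ and $S$ are symmetric positive definite, the inequality $Q<S^{-1}$ is equivalent to $S<Q^{-1}$, hence to $S-Q^{-1}<0$; this is just the fact that the map $A\mapsto A^{-1}$ reverses the usual ordering of positive definite matrices. The lemma will then be recast entirely as a statement about when the condition $S-Q^{-1}<0$ can be certified through the auxiliary variable $R$.

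The key algebraic step is the identity
\begin{equation}
R^{\mathrm{T}}QR-\left(  R+R^{\mathrm{T}}\right)  =\left(  R-Q^{-1}\right)
^{\mathrm{T}}Q\left(  R-Q^{-1}\right)  -Q^{-1},
\end{equation}
which holds because $Q$ is symmetric and invertible, so that $Q^{-1}$ is symmetric and $QQ^{-1}=Q^{-1}Q=I$; expanding the right-hand side and cancelling the cross terms recovers the left-hand side. Adding $S$ to both sides rewrites the left-hand side of (\ref{eq80}) as $\left(  R-Q^{-1}\right)^{\mathrm{T}}Q\left(  R-Q^{-1}\right)  +\left(  S-Q^{-1}\right)$. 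This single identity drives both implications.

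For sufficiency, suppose (\ref{eq80}) holds for some $R$. Using the rewriting above, $\left(  R-Q^{-1}\right)^{\mathrm{T}}Q\left(  R-Q^{-1}\right)  +\left(  S-Q^{-1}\right)  <0$. Since $Q>0$, the first summand is positive semidefinite, so the remaining term must satisfy $S-Q^{-1}<0$, i.e.\ $Q<S^{-1}$. For necessity, suppose $Q<S^{-1}$, equivalently $S-Q^{-1}<0$; choosing $R=Q^{-1}$ (which is well defined and lies in $\mathbf{R}^{n\times n}$) makes the quadratic term vanish and collapses the left-hand side of (\ref{eq80}) to exactly $S-Q^{-1}<0$. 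This settles both directions at once, and the explicit choice $R=Q^{-1}$ shows the certificate is never vacuous.

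Finally, the ``$\leq$'' version is obtained verbatim: the positive semidefinite term is still $\geq0$, so (\ref{eq80}) with ``$\leq$'' forces $S-Q^{-1}\leq0$, while $R=Q^{-1}$ again attains equality in the quadratic term and yields the converse. I do not expect any genuine obstacle here; the only points that require care are the bookkeeping in the completion of the square and the explicit use of the symmetry of $Q$ (so that $(Q^{-1})^{\mathrm{T}}=Q^{-1}$ and the cross terms combine to $R+R^{\mathrm{T}}$). Everything else is routine.
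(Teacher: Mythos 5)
Your proof is correct, and it takes a slightly different route from the paper's. The paper completes the square around $S$: it uses $\left(R-S\right)^{\mathrm{T}}S^{-1}\left(R-S\right)\geq 0$ to get $-R^{\mathrm{T}}S^{-1}R\leq S-\left(R+R^{\mathrm{T}}\right)$, combines this with (\ref{eq80}) to obtain $R^{\mathrm{T}}QR<R^{\mathrm{T}}S^{-1}R$, and then needs to observe that (\ref{eq80}) forces $R$ to be nonsingular so that a congruence by $R^{-1}$ yields $Q<S^{-1}$; the converse is settled by the choice $R=S$. You instead complete the square around $Q^{-1}$, rewriting the left-hand side of (\ref{eq80}) as $\left(R-Q^{-1}\right)^{\mathrm{T}}Q\left(R-Q^{-1}\right)+\left(S-Q^{-1}\right)$, which gives $S<Q^{-1}$ directly and hands the converse to the choice $R=Q^{-1}$. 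Your version buys a cleaner sufficiency direction --- no nonsingularity argument and no congruence transformation are needed, since you discard a positive semidefinite term rather than invert $R$ --- at the modest cost of invoking the anti-monotonicity of inversion on the positive definite cone ($Q<S^{-1}\Leftrightarrow S<Q^{-1}$), which the paper's arrangement avoids by working with $S^{-1}$ from the start. Both arguments handle the non-strict version verbatim. The only thing worth adding for completeness is a one-line justification of the order-reversal step (e.g.\ via congruence by $S^{1/2}$ and $Q^{1/2}$), since it is the one external fact your argument leans on.
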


\begin{proof}
It follows from (\ref{eq80}) that $R$ is nonsingular. Since $\left(
R-S\right)  ^{\mathrm{T}}S^{-1}\left(  R-S\right)  \geq0,$ namely,
\begin{equation}
-R^{\mathrm{T}}S^{-1}R\leq S-\left(  R+R^{\mathrm{T}}\right)  , \label{eq61}%
\end{equation}
we have from (\ref{eq80}) that
\begin{equation}
Q<-R^{-\mathrm{T}}(S-(R+R^{\mathrm{T}}))R^{-1}{\leq} R^{-\mathrm{T}%
}R^{\mathrm{T}}S^{-1}RR^{-1}=S^{-1}. \label{eq69}%
\end{equation}
On the other hand, if $Q<S^{-1}$ is satisfied, then (\ref{eq80}) is satisfied
by choosing $R=S$.
\end{proof}

\section{\label{sec2}The Multiple Jensen Inequality Based Stability
Conditions}

\subsection{The Multiple Jensen Inequality}

We first recall the following well-known Jensen inequality.

\begin{lemma}
\cite{Gu00cdc} For any positive definite matrix $Q>0,$ a positive number
$\tau>0,$ and a vector valued function $\omega:\left[  -\tau,0\right]
\rightarrow \mathbf{R}^{n}$ such that the integrals in the following are
well-defined, then%
\begin{equation}
\left(  \int_{-\tau}^{0}\omega \left(  s\right)  \mathrm{d}s\right)
^{\mathrm{T}}Q\left(  \int_{-\tau}^{0}\omega \left(  s\right)  \mathrm{d}%
s\right)  \leq \tau \int_{-\tau}^{0}\omega^{\mathrm{T}}\left(  s\right)
Q\omega \left(  s\right)  \mathrm{d}s. \label{cjensen}%
\end{equation}
Moreover, for a series of vectors $\xi_{i}\in \mathbf{R}^{n},i\in
\mathbf{I}\left[  1,N\right]  ,$ there holds%
\begin{equation}
\left(  \sum \limits_{i=1}^{N}\xi_{i}\right)  ^{\mathrm{T}}Q\left(
\sum \limits_{i=1}^{N}\xi_{i}\right)  \leq N\sum \limits_{i=1}^{N}\xi
_{i}^{\mathrm{T}}Q\xi_{i}. \label{djensen}%
\end{equation}

\end{lemma}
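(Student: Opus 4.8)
The plan is to derive both inequalities from a single idea: each asserts that a suitably constructed nonnegative quadratic form, built from the deviation of the individual terms from their global average, expands into exactly the claimed difference. Because $Q>0$, that quadratic form is nonnegative, and the cross terms will collapse precisely because the quantity being averaged is the integral (respectively the sum) appearing on the left-hand side. So no analysis beyond completing a square and two elementary identities is needed.

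For the continuous inequality (\ref{cjensen}), I would abbreviate $v\triangleq \int_{-\tau}^{0}\omega\left(s\right)\mathrm{d}s$ and start from the pointwise bound $\left(\omega\left(s\right)-\tau^{-1}v\right)^{\mathrm{T}}Q\left(\omega\left(s\right)-\tau^{-1}v\right)\geq0$, valid for every $s$ since $Q>0$. Integrating over $\left[-\tau,0\right]$ and expanding the square produces three pieces. The two simplifications that do all the work are that $\int_{-\tau}^{0}\omega^{\mathrm{T}}\left(s\right)\mathrm{d}s=v^{\mathrm{T}}$, so the cross term equals $-2\tau^{-1}v^{\mathrm{T}}Qv$, and that the final integrand is constant in $s$, so it contributes $\tau^{-1}v^{\mathrm{T}}Qv$. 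These combine to $-\tau^{-1}v^{\mathrm{T}}Qv$, leaving $\int_{-\tau}^{0}\omega^{\mathrm{T}}\left(s\right)Q\omega\left(s\right)\mathrm{d}s-\tau^{-1}v^{\mathrm{T}}Qv\geq0$, and multiplying by $\tau$ gives (\ref{cjensen}) exactly.

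For the discrete inequality (\ref{djensen}) I would run the identical argument with sums replacing integrals: set $\xi\triangleq\sum_{i=1}^{N}\xi_{i}$ and use $\sum_{i=1}^{N}\left(\xi_{i}-N^{-1}\xi\right)^{\mathrm{T}}Q\left(\xi_{i}-N^{-1}\xi\right)\geq0$. Expanding, the identity $\sum_{i=1}^{N}\xi_{i}^{\mathrm{T}}Q\xi=\xi^{\mathrm{T}}Q\xi$ handles the cross term and $\sum_{i=1}^{N}\xi^{\mathrm{T}}Q\xi=N\xi^{\mathrm{T}}Q\xi$ handles the last term, again leaving $-N^{-1}\xi^{\mathrm{T}}Q\xi$, so multiplying by $N$ yields (\ref{djensen}). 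There is no genuine obstacle here: the only ``choice'' is recognizing that the correct square centers each term at the global average, $\tau^{-1}v$ or $N^{-1}\xi$, after which positive semidefiniteness of $Q$ and the two elementary identities finish everything. One could equivalently phrase both as the Cauchy--Schwarz inequality in the $Q$-weighted inner product $\langle a,b\rangle=a^{\mathrm{T}}Qb$, but the completion-of-squares route is the most direct and uses nothing beyond $Q\geq0$.
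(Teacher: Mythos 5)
Your proof is correct: the completion-of-squares (variance) argument goes through exactly as you describe, and both identities you invoke are elementary. Note, however, that the paper offers no proof of this lemma at all --- it is quoted verbatim from the cited reference \cite{Gu00cdc} --- so there is nothing to match against line by line. The closest in-paper comparison is the proof of the generalized (multiple) Jensen inequality, Lemma \ref{lmmjensen}, where the authors proceed differently: they write the pointwise positive semidefinite block matrix
\begin{equation*}
\left[
\begin{array}
[c]{cc}
\omega^{\mathrm{T}}\left(  s\right)  \tau Q\omega \left(  s\right)   &
\omega^{\mathrm{T}}\left(  s\right)  \\
\omega \left(  s\right)   & \tfrac{1}{\tau}Q^{-1}
\end{array}
\right]  \geq0,
\end{equation*}
integrate entrywise, and apply a Schur complement to the result. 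The two routes are essentially the same inequality in disguise --- the Schur complement of that block matrix is precisely your completed square --- but they buy different things. Your version is more elementary (it needs only $Q\geq0$ and no matrix inversion, so it covers singular $Q$ directly), whereas the Schur-complement formulation is the one that scales to the paper's actual goal: with a distinct weight $Q_{i}$ in each block, summing the integrated block inequalities over $i$ and taking one final Schur complement yields (\ref{jensen}) with the combined weight $(\sum_{i}Q_{i})^{-1}$, a step that is much less transparent if one tries to guess the right square to complete. So your argument is a clean and fully valid proof of the stated lemma, but if you wanted to reprove the paper's Lemma \ref{lmmjensen} as well, the block-matrix formulation is the more natural vehicle.
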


Inequalities (\ref{cjensen}) and (\ref{djensen}) are respectively known as the
continuous-time Jensen inequality and the discrete-time Jensen inequality,
which have been widely used in the literature for the stability analysis and
stabilization of time-delay systems (see \cite{Gu00cdc} and the references
that have cited it). By using these two inequalities jointly we get the
following corollary.

\begin{corollary}
\label{coro3}Let $\tau_{i}\geq0,i\in \mathbf{I}\left[  1,N\right]  ,$ be $N$
given nonnegative scalars. Assume that $\omega_{i}:\left[  -\tau_{i},0\right]
\rightarrow \mathbf{R}^{n},i\in \mathbf{I}\left[  1,N\right]  ,$ are such that
the integrals in the following are well-defined, then%
\begin{equation}
\left(  \sum \limits_{i=1}^{N} x_{i}\right)  ^{\mathrm{T}}Q\left(
\sum \limits_{i=1}^{N}x_{i}\right)  \leq N\sum \limits_{i=1}^{N}\tau_{i}%
\int_{-\tau_{i}}^{0}\omega_{i}^{\mathrm{T}}\left(  s\right)  Q\omega
_{i}\left(  s\right)  \mathrm{d}s, \label{cdjensen}%
\end{equation}
where $x_{i}=\int_{-\tau_{i}}^{0}\omega_{i}\left(  s\right)  \mathrm{d}%
s,i\in \mathbf{I}\left[  1,N\right]  .$
\end{corollary}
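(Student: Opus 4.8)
The plan is to prove (\ref{cdjensen}) by composing the two inequalities already recalled in the preceding lemma, since the corollary is essentially their concatenation: the discrete-time Jensen inequality (\ref{djensen}) handles the outer finite sum, and the continuous-time Jensen inequality (\ref{cjensen}) handles each inner integral.

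First I would apply (\ref{djensen}) to the $N$ vectors $\xi_{i}=x_{i}=\int_{-\tau_{i}}^{0}\omega_{i}\left(  s\right)  \mathrm{d}s\in \mathbf{R}^{n}$, which immediately gives
\[
\left(  \sum \limits_{i=1}^{N}x_{i}\right)  ^{\mathrm{T}}Q\left(  \sum \limits_{i=1}^{N}x_{i}\right)  \leq N\sum \limits_{i=1}^{N}x_{i}^{\mathrm{T}}Qx_{i}.
\]
Then, for each fixed index $i$, I would bound the individual quadratic term $x_{i}^{\mathrm{T}}Qx_{i}$ by invoking the continuous-time inequality (\ref{cjensen}) with the length $\tau_{i}$ and the integrand $\omega_{i}$, obtaining
\[
x_{i}^{\mathrm{T}}Qx_{i}=\left(  \int_{-\tau_{i}}^{0}\omega_{i}\left(  s\right)  \mathrm{d}s\right)  ^{\mathrm{T}}Q\left(  \int_{-\tau_{i}}^{0}\omega_{i}\left(  s\right)  \mathrm{d}s\right)  \leq \tau_{i}\int_{-\tau_{i}}^{0}\omega_{i}^{\mathrm{T}}\left(  s\right)  Q\omega_{i}\left(  s\right)  \mathrm{d}s.
\]
Substituting this per-index estimate into the previous bound and summing over $i\in \mathbf{I}\left[  1,N\right]$ then yields (\ref{cdjensen}). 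The factor $N$ is inherited from the discrete step and the weights $\tau_{i}$ from the continuous step.

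There is no genuine obstacle here: the result is a pure two-step application of the two Jensen inequalities, with the only subtlety being the bookkeeping of which lengths and weights enter at each stage (in particular, that the distinct $\tau_{i}$ survive intact because (\ref{cjensen}) is applied separately to each $\omega_{i}$ rather than over a common interval). The one degenerate case worth a remark is $\tau_{i}=0$: there $\omega_{i}$ is supported on a single point, so $x_{i}=0$ and both sides of the corresponding per-index estimate vanish, preserving the inequality. Consequently the chain of bounds holds under the stated hypothesis that all the integrals are well-defined, and this is exactly the structure that the subsequent \emph{multiple} Jensen inequality will refine by allowing a separate weighting matrix $Q_{i}$ in place of the single shared $Q$.
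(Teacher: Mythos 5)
Your proof is correct and matches the paper's intent exactly: the paper states this corollary without a detailed proof, noting only that it follows ``by using these two inequalities jointly,'' which is precisely your two-step composition of (\ref{djensen}) applied to the vectors $x_i$ followed by (\ref{cjensen}) applied to each integral term. Nothing further is needed.
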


We notice that all the $N$ integrations $\int_{-\tau_{i}}^{0}\omega
_{i}^{\mathrm{T}}\left(  s\right)  Q\omega_{i}\left(  s\right)  \mathrm{d}%
s,i\in \mathbf{I}\left[  1,N\right]  ,$ on the right hand side of
(\ref{cdjensen}) share the \textit{same} weighting matrix $Q,$ which
is clearly very restrictive. To reduce the possible conservatism, we introduce
the following multiple Jensen inequality.

\begin{lemma}
\label{lmmjensen}Let $Q_{i}\in \mathbf{R}^{n\times n},i\in \mathbf{I}\left[
1,N\right]  ,$ be $N$ given positive definite matrices and $\tau_{i}%
>0,i\in \mathbf{I}\left[  1,N\right]  ,$ be $N$ given
%positive
scalars. Assume that the vector functions $\omega_{i}:\left[  -\tau
_{i},0\right]  \rightarrow \mathbf{R}^{n},i\in \mathbf{I}\left[  1,N\right]  ,$
are such that the integrals in the following are well-defined, then%
\begin{equation}
\left(  \sum \limits_{i=1}^{N}x_{i}\right)  ^{\mathrm{T}}{Q^{-1}}\left(
\sum \limits_{i=1}^{N}x_{i}\right)  \leq \sum \limits_{i=1}^{N}\int_{-\tau_{i}%
}^{0}\omega_{i}^{\mathrm{T}}\left(  s\right)  \tau_{i}Q_{i}^{-1}\omega
_{i}\left(  s\right)  \mathrm{d}s, \label{jensen}%
\end{equation}
where $x_{i}=\int_{-\tau_{i}}^{0}\omega_{i}\left(  s\right)  \mathrm{d}%
s,i\in \mathbf{I}\left[  1,N\right]  $ and {$Q=%
%TCIMACRO{\tsum \nolimits_{i=1}^{N}}%
%BeginExpansion
{\textstyle \sum \nolimits_{i=1}^{N}}
%EndExpansion
Q_{i}.$} Moreover, for a series of vectors $\xi_{i}\in \mathbf{R}^{n}%
,i\in \mathbf{I}\left[  1,N\right]  ,$ there holds%
\begin{equation}
\left(  \sum \limits_{i=1}^{N}\xi_{i}\right)  ^{\mathrm{T}}\left(
\sum \limits_{i=1}^{N}Q_{i}\right)  ^{-1}\left(  \sum \limits_{i=1}^{N}\xi
_{i}\right)  \leq \sum \limits_{i=1}^{N}\xi_{i}^{\mathrm{T}}Q_{i}^{-1}\xi_{i}.
\label{mdjensen}%
\end{equation}

\end{lemma}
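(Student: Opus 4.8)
The plan is to establish the discrete inequality (\ref{mdjensen}) first and then deduce the continuous inequality (\ref{jensen}) from it by invoking the classical continuous-time Jensen inequality (\ref{cjensen}) one term at a time. This ordering is convenient because (\ref{mdjensen}) is the genuinely new ingredient, whereas (\ref{jensen}) will follow by a short chaining of inequalities.

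For (\ref{mdjensen}) I would set $S=\sum_{i=1}^{N}Q_{i}$ (which equals the matrix $Q$ of the lemma) and $\xi=\sum_{i=1}^{N}\xi_{i}$, and argue by completing the square. Writing $\eta=S^{-1}\xi$ and using that each $Q_{i}^{-1}>0$,
\[
0\leq\sum_{i=1}^{N}\left(  \xi_{i}-Q_{i}\eta\right)  ^{\mathrm{T}}Q_{i}^{-1}\left(  \xi_{i}-Q_{i}\eta\right)  .
\]
Expanding the right-hand side and using $\sum_{i=1}^{N}\xi_{i}=\xi$ and $\sum_{i=1}^{N}Q_{i}=S$ collapses the linear and quadratic contributions into $-2\xi^{\mathrm{T}}\eta+\eta^{\mathrm{T}}S\eta$; substituting $\eta=S^{-1}\xi$ reduces this to $-\xi^{\mathrm{T}}S^{-1}\xi$, so the displayed nonnegativity is exactly $\sum_{i=1}^{N}\xi_{i}^{\mathrm{T}}Q_{i}^{-1}\xi_{i}-\xi^{\mathrm{T}}S^{-1}\xi\geq0$, which is (\ref{mdjensen}). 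An equivalent and perhaps more transparent route is to observe that each Schur-complement block $\bigl(\begin{smallmatrix}Q_{i}&\xi_{i}\\\xi_{i}^{\mathrm{T}}&\xi_{i}^{\mathrm{T}}Q_{i}^{-1}\xi_{i}\end{smallmatrix}\bigr)$ is positive semidefinite, sum these blocks over $i$, and read off that the Schur complement of the aggregated $(n+1)\times(n+1)$ matrix is nonnegative.

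For (\ref{jensen}) I would first apply the continuous-time Jensen inequality (\ref{cjensen}), with the positive definite weighting matrix chosen to be $Q_{i}^{-1}$, to each $x_{i}=\int_{-\tau_{i}}^{0}\omega_{i}(s)\,\mathrm{d}s$, which gives
\[
x_{i}^{\mathrm{T}}Q_{i}^{-1}x_{i}\leq\tau_{i}\int_{-\tau_{i}}^{0}\omega_{i}^{\mathrm{T}}(s)Q_{i}^{-1}\omega_{i}(s)\,\mathrm{d}s,\qquad i\in\mathbf{I}\left[  1,N\right]  .
\]
I would then invoke (\ref{mdjensen}) with $\xi_{i}=x_{i}$ to bound $\left(\sum_{i=1}^{N}x_{i}\right)^{\mathrm{T}}Q^{-1}\left(\sum_{i=1}^{N}x_{i}\right)$ from above by $\sum_{i=1}^{N}x_{i}^{\mathrm{T}}Q_{i}^{-1}x_{i}$. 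Chaining these two estimates and absorbing the scalar $\tau_{i}$ into the integrand produces precisely the right-hand side of (\ref{jensen}).

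The only real idea is the aggregation step used for (\ref{mdjensen}); I expect no serious obstacle beyond making the correct choice of the per-term weight $Q_{i}^{-1}$ in (\ref{cjensen}), which is exactly what makes the discrete bound dovetail with the continuous one. Everything after that is a routine term-by-term application of the classical Jensen inequality followed by a concatenation of the resulting inequalities.
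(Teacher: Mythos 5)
Your proof is correct, but it is organized differently from the paper's. The paper proves the continuous inequality (\ref{jensen}) directly: for each $i$ and each $s$ the block matrix $\left[\begin{smallmatrix}\omega_{i}^{\mathrm{T}}(s)\tau_{i}Q_{i}^{-1}\omega_{i}(s) & \omega_{i}^{\mathrm{T}}(s)\\ \omega_{i}(s) & \frac{1}{\tau_{i}}Q_{i}\end{smallmatrix}\right]$ is positive semidefinite by a Schur complement, this is integrated over $[-\tau_{i},0]$, the resulting blocks are summed over $i$, and a final Schur complement of the aggregate yields (\ref{jensen}) in one pass; the discrete inequality (\ref{mdjensen}) is then dispatched as ``similar.'' You instead prove (\ref{mdjensen}) first --- your completing-the-square identity $\sum_{i}(\xi_{i}-Q_{i}S^{-1}\xi)^{\mathrm{T}}Q_{i}^{-1}(\xi_{i}-Q_{i}S^{-1}\xi)=\sum_{i}\xi_{i}^{\mathrm{T}}Q_{i}^{-1}\xi_{i}-\xi^{\mathrm{T}}S^{-1}\xi$ checks out, and your alternative ``sum the PSD Schur blocks'' remark is in fact exactly the paper's mechanism --- and then obtain (\ref{jensen}) by chaining the classical continuous-time Jensen inequality (\ref{cjensen}), applied term by term with weight $Q_{i}^{-1}$, through (\ref{mdjensen}). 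Your route is more modular and makes transparent that the continuous multiple Jensen inequality is just the discrete one composed with $N$ copies of the classical single-term bound; the paper's route is self-contained (it never invokes (\ref{cjensen})) and treats both the continuous and discrete cases by one uniform integrate-and-aggregate Schur-complement argument. Both are complete and correct.
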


\begin{proof}
Notice that, for any $i\in \mathbf{I}\left[  1,N\right]  ,$ by a Schur
complement, there holds%
\begin{equation}
\left[
\begin{array}
[c]{cc}%
\omega_{i}^{\mathrm{T}}\left(  s\right)  \tau_{i}Q_{i}^{-1}\omega_{i}\left(
s\right)  & \omega_{i}^{\mathrm{T}}\left(  s\right) \\
\omega_{i}\left(  s\right)  & \frac{1}{\tau_{i}}Q_{i}%
\end{array}
\right]  \geq0,\;i\in \mathbf{I}\left[  1,N\right]  . \label{eq2}%
\end{equation}
Taking integration on both sides of the above inequality gives%
\begin{equation}
\left[
\begin{array}
[c]{cc}%
\int_{-\tau_{i}}^{0}\omega_{i}^{\mathrm{T}}\left(  s\right)  \tau_{i}%
Q_{i}^{-1}\omega_{i}\left(  s\right)  \mathrm{d}s & \int_{-\tau_{i}}^{0}%
\omega_{i}^{\mathrm{T}}\left(  s\right)  \mathrm{d}s\\
\int_{-\tau_{i}}^{0}\omega_{i}\left(  s\right)  \mathrm{d}s & Q_{i}%
\end{array}
\right]  \geq0, \label{eq3}%
\end{equation}
where $i\in \mathbf{I}\left[  1,N\right]  $ , which implies%
\begin{equation}
\left[
\begin{array}
[c]{cc}%
\sum \limits_{i=1}^{N}\int_{-\tau_{i}}^{0}\omega_{i}^{\mathrm{T}}\left(
s\right)  \tau_{i}Q_{i}^{-1}\omega_{i}\left(  s\right)  \mathrm{d}s &
\sum \limits_{i=1}^{N}\int_{-\tau_{i}}^{0}\omega_{i}^{\mathrm{T}}\left(
s\right)  \mathrm{d}s\\
\sum \limits_{i=1}^{N}\int_{-\tau_{i}}^{0}\omega_{i}\left(  s\right)
\mathrm{d}s & \sum \limits_{i=1}^{N}Q_{i}%
\end{array}
\right]  \geq0. \label{eq4}%
\end{equation}
By a Schur complement again, (\ref{eq4}) is equivalent to
(\ref{jensen}). Finally, the inequality in (\ref{mdjensen}) can be proven in a similar way.
\end{proof}

Now every integration $\int_{-\tau_{i}}^{0}\omega_{i}^{\mathrm{T}}\left(
s\right)  Q_{i}\omega_{i}\left(  s\right)  \mathrm{d}s,i\in \mathbf{I}\left[
1,N\right]  ,$ on the right hand side of (\ref{jensen}) is weighted by an
individual weighting matrix $Q_{i},i\in \mathbf{I}\left[  1,N\right]  ,$ which
can introduce more decision variables that can be optimized to reduce the
conservatism of the resulting conditions. The multiple Jensen inequalities
(\ref{jensen}) and (\ref{mdjensen}) are clearly less conservative than the
inequalities in (\ref{cdjensen}) and (\ref{djensen}) since the later ones can
be obtained immediately by setting $Q_{i}=Q^{-1},i\in \mathbf{I}\left[
1,N\right]  ,$ in the former ones.

By applying the Jensen inequality (\ref{cdjensen}) in Corollary \ref{coro3}
and choosing the following Lyapunov functional{
\begin{align}
V\left(  x_{t}\right)   &  =\int_{t-\tau}^{t}x^{\mathrm{T}}\left(  s\right)
Px\left(  s\right)  \mathrm{d}s\nonumber \\
&  +\sum \limits_{i=1}^{N}\int_{-\tau_{i}}^{0}\left(  s+\tau_{i}\right)
x^{\mathrm{T}}\left(  t+s\right)  Q_{i}x\left(  t+s\right)  \mathrm{d}s,
\label{eqv2}%
\end{align}}
the following result was obtained in \cite{Aguliar10AMC}.

\begin{lemma}
\label{lm1}\cite{Aguliar10AMC} The IDS (\ref{ids}) is exponentially stable if
there exist $N+1$ positive definite matrices $P,Q_{i}\in \mathbf{R}^{n\times
n},i\in \mathbf{I}\left[  1,N\right]  ,$ such that the following coupled LMIs
are satisfied%
\begin{equation}
N\tau_{i}A_{i}^{\mathrm{T}}\left(  P+\sum \limits_{j=1}^{N}\tau_{j}%
Q_{j}\right)  A_{i}-Q_{i}<0,\;i\in \mathbf{I}\left[  1,N\right]  .
\label{LMIamc}%
\end{equation}

\end{lemma}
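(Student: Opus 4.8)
The plan is to verify that the Lyapunov functional $V$ in (\ref{eqv2}) satisfies the three hypotheses of Lemma \ref{lm0}, with the coupled LMIs (\ref{LMIamc}) entering only at the very last sign check on $\dot V$. Throughout I write $M\triangleq P+\sum_{j=1}^N\tau_jQ_j$, which is positive definite since $P>0$ and $Q_j>0$, $j\in\mathbf I[1,N]$.

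First I would establish the quadratic bounds (\ref{eq64}). Rewriting the first term of (\ref{eqv2}) as $\int_{-\tau}^0 x^{\mathrm T}(t+\theta)Px(t+\theta)\mathrm d\theta$ and observing that the factor $s+\tau_i$ is nonnegative on $[-\tau_i,0]$, the functional is bounded below by $\lambda_{\min}(P)\int_{-\tau}^0\|x(t+\theta)\|^2\mathrm d\theta$, so $\alpha_1=\lambda_{\min}(P)$ works. For the upper bound I would use $s+\tau_i\le\tau_i\le\tau$ together with $\int_{-\tau_i}^0\|x\|^2\le\int_{-\tau}^0\|x\|^2$ to obtain $\alpha_2=\lambda_{\max}(P)+\sum_{i=1}^N\tau_i\lambda_{\max}(Q_i)$.

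Next I would compute $\dot V$ along (\ref{ids}); since the solution $x$ is continuous this needs only the Leibniz rule and no derivative of $x$. The first term of (\ref{eqv2}) contributes $x^{\mathrm T}(t)Px(t)-x^{\mathrm T}(t-\tau)Px(t-\tau)$, while each weighted term, after the substitution $\theta=t+s$, contributes $\tau_ix^{\mathrm T}(t)Q_ix(t)-\int_{-\tau_i}^0x^{\mathrm T}(t+s)Q_ix(t+s)\mathrm ds$. Summing, discarding the nonpositive term $-x^{\mathrm T}(t-\tau)Px(t-\tau)$, then replacing $x(t)$ by $\sum_{i=1}^N A_i\int_{-\tau_i}^0x(t+s)\mathrm ds$ and applying Corollary \ref{coro3} (that is, (\ref{cdjensen})) with weighting matrix $M$ and $\omega_i(s)=A_ix(t+s)$ to bound $x^{\mathrm T}(t)Mx(t)$ from above, I arrive at
\[
\dot V\le\sum_{i=1}^N\int_{-\tau_i}^0 x^{\mathrm T}(t+s)\bigl(N\tau_iA_i^{\mathrm T}MA_i-Q_i\bigr)x(t+s)\,\mathrm ds .
\]
The integrand matrices here are exactly the left-hand sides of (\ref{LMIamc}), so every term is nonpositive.

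The step I expect to require the most care is matching the integration range to the one demanded by (\ref{eq65}): each integral above runs over $[-\tau_i,0]$, whereas Lemma \ref{lm0} asks for a bound over the full interval $[-\tau,0]$, and the naive estimate $\int_{-\tau_i}^0\|x\|^2\le\int_{-\tau}^0\|x\|^2$ points the wrong way. The remedy is that $\tau=\max_{i}\tau_i$ is attained at some index $i^\star$, so that $\int_{-\tau_{i^\star}}^0=\int_{-\tau}^0$; retaining only that single nonpositive term and dropping the others yields $\dot V\le-\alpha_3\int_{-\tau}^0\|x(t+s)\|^2\mathrm ds$ with $\alpha_3=\lambda_{\min}\!\bigl(Q_{i^\star}-N\tau_{i^\star}A_{i^\star}^{\mathrm T}MA_{i^\star}\bigr)>0$. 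Having verified (\ref{eq64}) and (\ref{eq65}), an appeal to Lemma \ref{lm0} finishes the proof.
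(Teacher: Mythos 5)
Your proof is correct and follows exactly the route the paper indicates for this lemma (the Lyapunov functional (\ref{eqv2}) combined with the joint Jensen inequality (\ref{cdjensen}) of Corollary \ref{coro3}), with the computation of $\dot V$, the application of the inequality with weighting matrix $P+\sum_j\tau_jQ_j$, and the final sign check all carried out properly. Your handling of the integration-range issue via the index $i^\star$ attaining $\tau=\max_i\tau_i$ is a legitimate way to obtain the bound (\ref{eq65}) required by Lemma \ref{lm0}.
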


In the next two subsections, we will show how to use the multiple Jensen
inequality (\ref{jensen}) to improve the above result.

\subsection{The First Sufficient Stability Condition}

In this subsection we present a new sufficient condition for the exponential
stability of the IDS (\ref{ids}) by applying the multiple Jensen inequality
(\ref{jensen}) and choosing a similar Lyapunov functional as (\ref{eqv2}).

\begin{theorem}
\label{th3}Consider the IDS (\ref{ids}). Then

\begin{enumerate}
\item It is exponentially stable if there exist $2N$ positive definite
matrices $S_{i},Q_{i}\in \mathbf{R}^{n\times n},i\in \mathbf{I}\left[
1,N\right]  ,$ such that the following nonlinear matrix inequalities are
satisfied
\begin{align}
&  \tau_{i}^{2}A_{i}^{\mathrm{T}}Q_{i}^{-1}A_{i}-S_{i}<0,\;i\in \mathbf{I}%
\left[  1,N\right]  ,\label{NMI1}\\
&  \sum \limits_{i=1}^{N}S_{i}<\left(  \sum \limits_{i=1}^{N}Q_{i}\right)
^{-1}. \label{NMI2}%
\end{align}

\item The nonlinear matrix inequalities in (\ref{NMI1})--(\ref{NMI2}) are
solvable if and only if there exist $2N$ positive definite matrices
$S_{i},Q_{i}\in \mathbf{R}^{n\times n},i\in \mathbf{I}\left[  1,N\right]  ,$ and
a matrix $R\in \mathbf{R}^{n\times n}$ such that the following LMIs are
satisfied%
\begin{align}
&  \sum \limits_{i=1}^{n}Q_{i}+\sum \limits_{i=1}^{n}S_{i}-\left(
R^{\mathrm{T}}+R\right)  <0,\label{LMI1}\\
&  \left[
\begin{array}
[c]{cc}%
-S_{i} & \tau_{i}A_{i}^{\mathrm{T}}R\\
\tau_{i}R^{\mathrm{T}}A_{i} & -Q_{i}%
\end{array}
\right]  <0,\;i\in \mathbf{I}\left[  1,N\right]  . \label{LMI2}%
\end{align}

\end{enumerate}
\end{theorem}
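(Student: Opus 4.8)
The plan is to prove the two parts of Theorem~\ref{th3} in turn, using the tools already assembled in the excerpt. For Part~1 (the sufficiency of the nonlinear inequalities for exponential stability), I would invoke the Lyapunov functional \eqref{eqv2} together with the general Lyapunov stability criterion in Lemma~\ref{lm0}. First I would verify the lower and upper bounds \eqref{eq64} on $V(x_t)$: the term $\int_{t-\tau}^t x^{\mathrm{T}}(s)Px(s)\,\mathrm{d}s$ controls $V$ from below and above up to the extreme eigenvalues of $P$ and the $Q_i$, so bounds of the required form follow readily. The substantive computation is the derivative condition \eqref{eq65}. Differentiating \eqref{eqv2} along trajectories of the IDS \eqref{ids} produces a boundary term $x^{\mathrm{T}}(t)Px(t)-x^{\mathrm{T}}(t-\tau)Px(t-\tau)$ from the first integral and, from the summation term, a combination of $\tau_i\, x^{\mathrm{T}}(t)Q_i x(t)$ against the integral $\int_{-\tau_i}^0 x^{\mathrm{T}}(t+s)Q_i x(t+s)\,\mathrm{d}s$. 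Substituting $x(t)=\sum_i A_i\int_{-\tau_i}^0 x(t+s)\,\mathrm{d}s$ into the positive $x^{\mathrm{T}}(t)(P+\sum_j\tau_j Q_j)x(t)$ term is where the multiple Jensen inequality \eqref{jensen} does its work.

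Here is the key maneuver I would emphasize. Setting $\omega_i(s)=x(t+s)$ and $x_i=\int_{-\tau_i}^0 x(t+s)\,\mathrm{d}s$, the quadratic form $\big(\sum_i A_i x_i\big)^{\mathrm{T}} M \big(\sum_i A_i x_i\big)$, with $M=P+\sum_j\tau_j Q_j$, must be dominated by a sum of individual integral terms, one per delay, each with its own weighting matrix. To match the structure of \eqref{jensen} I would apply the inequality to the transformed functions $A_i^{\mathrm{T}}$-weighted quantities; concretely, \eqref{NMI1} rearranges to $\tau_i^2 A_i^{\mathrm{T}}Q_i^{-1}A_i < S_i$ and \eqref{NMI2} gives $\sum_i S_i<(\sum_i Q_i)^{-1}$, and these two together are exactly what is needed so that the Jensen-bounded expression is absorbed by the negative $-\sum_i\int_{-\tau_i}^0 x^{\mathrm{T}}(t+s)Q_i x(t+s)\,\mathrm{d}s$ coming from the derivative, leaving $\dot V$ bounded above by a strictly negative multiple of $\int_{-\tau}^0\|x(t+\theta)\|^2\,\mathrm{d}\theta$. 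The main obstacle here is bookkeeping: correctly pairing the weighting matrices $Q_i^{-1}$ appearing in \eqref{jensen} with the $A_i$ and the factor $\tau_i$, and confirming that the strict inequalities \eqref{NMI1}--\eqref{NMI2} yield a uniform negative margin rather than a merely nonpositive one.

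For Part~2, I would prove the equivalence of the nonlinear inequalities \eqref{NMI1}--\eqref{NMI2} with the LMIs \eqref{LMI1}--\eqref{LMI2} by invoking the linearization Lemma~\ref{lm2}. The constraint \eqref{NMI2}, namely $\sum_i S_i<(\sum_i Q_i)^{-1}$, is precisely of the form $Q<S^{-1}$ with $Q\leftarrow \sum_i S_i$ and $S\leftarrow \sum_i Q_i$; Lemma~\ref{lm2} then states this holds iff there is an $R$ with $R^{\mathrm{T}}(\sum_i S_i)R+\sum_i Q_i-(R+R^{\mathrm{T}})<0$. At first glance this does not match \eqref{LMI1}, which reads $\sum_i Q_i+\sum_i S_i-(R^{\mathrm{T}}+R)<0$ with no $R^{\mathrm{T}}(\cdot)R$ factor; the reconciliation is that the quadratic term $R^{\mathrm{T}}(\sum_i S_i)R$ gets distributed across the per-index LMIs. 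I would therefore treat the whole system jointly: start from \eqref{NMI1}, written as $S_i>\tau_i^2 A_i^{\mathrm{T}}Q_i^{-1}A_i$, and apply a Schur complement to the $2\times2$ block \eqref{LMI2} with the matrix variable $R$ absorbing the $S_i^{-1}$ linearization, so that the block being negative is equivalent to $\tau_i^2 R^{\mathrm{T}}A_i Q_i^{-1}A_i^{\mathrm{T}}R<S_i$ (after the appropriate congruence), and combine this with \eqref{LMI1} to recover Lemma~\ref{lm2}'s condition on the sum.

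The delicate point in Part~2 is getting the congruence transformations and the placement of $R$ exactly right, since $R$ must simultaneously linearize the single inverse constraint \eqref{NMI2} and interact with each $A_i$ in \eqref{LMI2}. I would argue both directions explicitly: for necessity, given a solution of \eqref{NMI1}--\eqref{NMI2}, use the ``only if'' part of Lemma~\ref{lm2} to produce $R$ and check that this same $R$ makes each \eqref{LMI2} block negative via a Schur complement applied to \eqref{NMI1}; for sufficiency, given $R$ satisfying the LMIs, apply Schur complements in reverse to each \eqref{LMI2} to recover \eqref{NMI1}, then use \eqref{LMI1} with the ``if'' direction of Lemma~\ref{lm2} to recover \eqref{NMI2}. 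I expect the sufficiency direction to be the more demanding, because extracting \eqref{NMI2} from \eqref{LMI1} requires that the $R^{\mathrm{T}}(\sum_i S_i)R$ term, which \eqref{LMI1} seems to omit, is in fact implied by the combination of \eqref{LMI1} and the already-established \eqref{NMI1} bounds on the $S_i$.
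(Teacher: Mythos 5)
Your Part 1 is essentially the paper's argument and is sound up to one relabeling you should make explicit: the functional must carry the matrices $S_i$ (the paper's \eqref{eq46} uses the weight $(\frac{s}{\tau_i}+1)S_i$, i.e.\ \eqref{eqv2} with $Q_i$ replaced by $S_i/\tau_i$), so that the derivative yields $x^{\mathrm T}(t)(P+\sum_i S_i)x(t)-\sum_i\frac{1}{\tau_i}\int_{-\tau_i}^0x^{\mathrm T}(t+s)S_ix(t+s)\,\mathrm{d}s$; then \eqref{NMI2} bounds the first piece by $x^{\mathrm T}(t)(\sum_iQ_i)^{-1}x(t)$, the multiple Jensen inequality \eqref{jensen} with $\omega_i(s)=A_ix(t+s)$ bounds that by $\sum_i\tau_i\int_{-\tau_i}^0x^{\mathrm T}(t+s)A_i^{\mathrm T}Q_i^{-1}A_ix(t+s)\,\mathrm{d}s$, and \eqref{NMI1} supplies the uniform negative margin. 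As you wrote it, the negative term $-\sum_i\int x^{\mathrm T}Q_ix$ from \eqref{eqv2} verbatim does not pair with \eqref{NMI1}--\eqref{NMI2}.

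The genuine gap is in Part 2. You apply Lemma \ref{lm2} with $Q\leftarrow\sum_iS_i$, $S\leftarrow\sum_iQ_i$, obtain the quadratic term $R^{\mathrm T}(\sum_iS_i)R$, and then propose to ``distribute'' it across the per-index blocks, leaving the sufficiency direction resting on the unproved hope that this term is implied by \eqref{LMI1} combined with \eqref{NMI1}. Carried through consistently, that route forces the renaming $R^{\mathrm T}S_iR\to S_i$, which turns \eqref{NMI1} into $\tau_i^2R^{\mathrm T}A_i^{\mathrm T}Q_i^{-1}A_iR<S_i$ and hence into a block LMI with off-diagonal entry $\tau_iA_iR$ --- \emph{not} the stated \eqref{LMI2}, whose off-diagonal entry is $\tau_iR^{\mathrm T}A_i$; so you would prove equivalence with a different set of LMIs. (Your Schur complement of \eqref{LMI2} is also garbled: it gives $\tau_i^2A_i^{\mathrm T}RQ_i^{-1}R^{\mathrm T}A_i<S_i$, not $\tau_i^2R^{\mathrm T}A_iQ_i^{-1}A_i^{\mathrm T}R<S_i$.) The missing idea is to apply Lemma \ref{lm2} the other way: since \eqref{NMI2} is equivalent to $\sum_iQ_i<(\sum_iS_i)^{-1}$, take $Q\leftarrow\sum_iQ_i$, $S\leftarrow\sum_iS_i$ to get $R^{\mathrm T}(\sum_iQ_i)R+\sum_iS_i-(R+R^{\mathrm T})<0$, write \eqref{NMI1} in Schur form as \eqref{eq41} and apply the congruence $\mathrm{diag}(I_n,R)$ to get \eqref{eq42} with $(2,2)$ block $-R^{\mathrm T}Q_iR$, and then rename $R^{\mathrm T}Q_iR\to Q_i$ throughout (legitimate since \eqref{LMI1} forces $R+R^{\mathrm T}>0$, hence $R$ nonsingular). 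Both \eqref{LMI1} and \eqref{LMI2} then appear exactly, in both directions, with no interaction between the two families of inequalities.
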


\begin{proof}
\textit{Proof of Item 1.} The inequality in (\ref{NMI2}) implies that there
exists a positive definite matrix $P>0$ such that $P\leq \varepsilon I_{n}$ for
some sufficiently small number $\varepsilon>0$ and such that%
\begin{equation}
P+\sum \limits_{i=1}^{N}S_{i}<\left(  \sum \limits_{i=1}^{N}Q_{i}\right)
^{-1},\;i\in \mathbf{I}\left[  1,N\right]  . \label{eq45}%
\end{equation}
Consider the following Lyapunov functional{
\begin{align}
V\left(  x_{t}\right)   &  =\int_{t-\tau}^{t}x^{\mathrm{T}}\left(  s\right)
Px\left(  s\right)  \mathrm{d}s\nonumber \\
&  +\sum \limits_{i=1}^{N}\int_{-\tau_{i}}^{0}\left(  \frac{s}{\tau_{i}%
}+1\right)  x^{\mathrm{T}}\left(  t+s\right)  S_{i}x\left(  t+s\right)
\mathrm{d}s, \label{eq46}%
\end{align}}
which is in the form of (\ref{eqv2}) {that was used in \cite{Aguliar10AMC}.} The
time-derivative of $V\left(  x_{t}\right)  $ satisfies
\begin{align}
\dot{V}\left(  x_{t}\right)   &  \leq x^{\mathrm{T}}\left(  t\right)  \left(
P+\sum \limits_{i=1}^{N}S_{i}\right)  x\left(  t\right)  -\sum \limits_{i=1}%
^{N}y_{i}\nonumber \\
&  \leq x^{\mathrm{T}}\left(  t\right)  \left(  \sum \limits_{i=1}^{N}%
Q_{i}\right)  ^{-1}x\left(  t\right)  -\sum \limits_{i=1}^{N}y_{i}\nonumber \\
&  \leq \sum \limits_{i=1}^{N}\int_{-\tau_{i}}^{0}x^{\mathrm{T}}\left(
t+s\right)  \tau_{i}A_{i}^{\mathrm{T}}Q_{i}^{-1}A_{i}x\left(  t+s\right)
\mathrm{d}s-\sum \limits_{i=1}^{N}y_{i}\nonumber \\
&  =\sum \limits_{i=1}^{N}\frac{1}{\tau_{i}}\int_{-\tau_{i}}^{0}x^{\mathrm{T}%
}\left(  t+s\right)  \left(  \tau_{i}^{2}A_{i}^{\mathrm{T}}Q_{i}^{-1}%
A_{i}-S_{i}\right)  x\left(  t+s\right)  \mathrm{d}s\nonumber \\
&  \leq-\gamma \int_{-\tau}^{0}\left \Vert x\left(  t+s\right)  \right \Vert
^{2}\mathrm{d}s, \label{eq47}%
\end{align}
where $y_{i}=\frac{1}{\tau_{i}}\int_{-\tau_{i}}^{0}x^{\mathrm{T}}\left(
t+s\right)  S_{i}x\left(  t+s\right)  \mathrm{d}s,$ $\gamma>0$ is some
constant and we have used the multiple Jensen inequality (\ref{jensen}) and
the nonlinear matrix inequalities (\ref{NMI1}). Hence, it follows from Lemma
\ref{lm0} that the IDS (\ref{ids}) is exponentially stable.

\textit{Proof of Item 2}. Let $S=%
%TCIMACRO{\tsum \nolimits_{i=1}^{N}}%
%BeginExpansion
{\textstyle \sum \nolimits_{i=1}^{N}}
%EndExpansion
S_{j}$ and $Q=%
%TCIMACRO{\tsum \nolimits_{i=1}^{N}}%
%BeginExpansion
{\textstyle \sum \nolimits_{i=1}^{N}}
%EndExpansion
Q_{j}.$ Then (\ref{NMI2}) is equivalent to $Q<S^{-1},$ which, by Lemma
\ref{lm2}, is satisfied if and only if there exists an $R\in \mathbf{R}%
^{n\times n}$ such that%
\begin{equation}
R^{\mathrm{T}}QR+S-\left(  R^{\mathrm{T}}+R\right)  <0. \label{eq40}%
\end{equation}
On the other hand, by the Schur complement, the inequalities in (\ref{NMI1})
are satisfied if and only if%
\begin{equation}
\left[
\begin{array}
[c]{cc}%
-S_{i} & \tau_{i}A_{i}^{\mathrm{T}}\\
\tau_{i}A_{i} & -Q_{i}%
\end{array}
\right]  <0,\;i\in \mathbf{I}\left[  1,N\right]  , \label{eq41}%
\end{equation}
which, by a congruence transformation, are equivalent to%
\begin{equation}
\left[
\begin{array}
[c]{cc}%
-S_{i} & \tau_{i}A_{i}^{\mathrm{T}}R\\
\tau_{i}R^{\mathrm{T}}A_{i} & -R^{\mathrm{T}}Q_{i}R
\end{array}
\right]  <0,\;i\in \mathbf{I}\left[  1,N\right]  . \label{eq42}%
\end{equation}
It is clear that (\ref{eq40}) and (\ref{eq42}) are respectively equivalent to
(\ref{LMI1}) and (\ref{LMI2}) by a substitution $R^{\mathrm{T}}Q_{i}%
R\rightarrow Q_{i},i\in \mathbf{I}\left[  1,N\right]  $ (and thus
$R^{\mathrm{T}}QR\rightarrow Q$). The proof is finished.
\end{proof}

%From the proof of Theorem \ref{th3} it is not surprising that we can show both
%in theory (see Proposition \ref{coro1} given later) and by numerical examples
%(see Section \ref{sec4}) that Theorem \ref{th3} is always less conservative
%than Lemma \ref{lm1}.

\subsection{The Second Sufficient Stability Condition}

With the help of the multiple Jensen inequality (\ref{jensen}), we further
present in this subsection a new sufficient condition for the exponential
stability of the IDS (\ref{ids}) with an alternative Lyapunov functional,
which may possess some advantages over (\ref{eqv2}) and (\ref{eq46}).

\begin{theorem}
\label{th1}Consider the IDS (\ref{ids}). Then

\begin{enumerate}
\item It is exponentially stable if there exist $N$ positive definite matrices
$Q_{i}\in \mathbf{R}^{n\times n},i\in \mathbf{I}\left[  1,N\right]  ,$ such that
the following nonlinear matrix inequality is satisfied%
\begin{equation}
\sum \limits_{i=1}^{N}\tau_{i}^{2}A_{i}^{\mathrm{T}}Q_{i}^{-1}A_{i}-\left(
\sum \limits_{i=1}^{n}Q_{i}\right)  ^{-1}<0. \label{eq44}%
\end{equation}

\item The nonlinear matrix inequality (\ref{eq44}) is solvable if and only if
there exist $N$ positive definite matrices $Q_{i}\in \mathbf{R}^{n\times
n},i\in \mathbf{I}\left[  1,N\right]  ,$ such that the following LMI is
satisfied%
\begin{equation}%
%TCIMACRO{\dsum \limits_{i=1}^{N}}%
%BeginExpansion
{\displaystyle \sum \limits_{i=1}^{N}}
%EndExpansion
\left[
\begin{array}
[c]{c}%
\tau_{1}A_{1}\\
\vdots \\
\tau_{N}A_{N}%
\end{array}
\right]  Q_{i}\left[
\begin{array}
[c]{c}%
\tau_{1}A_{1}\\
\vdots \\
\tau_{N}A_{N}%
\end{array}
\right]  ^{\mathrm{T}}-\left[
\begin{array}
[c]{ccc}%
Q_{1} &  & \\
& \ddots & \\
&  & Q_{N}%
\end{array}
\right]  <0. \label{LMI}%
\end{equation}

\end{enumerate}
\end{theorem}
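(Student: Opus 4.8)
The plan is to treat the two items independently: Item~1 by a Lyapunov--Krasovskii argument feeding Lemma~\ref{lm0}, and Item~2 by a pair of Schur complements. For Item~1 I would take a functional of the same shape as (\ref{eq46}),
\[
V(x_t)=\int_{t-\tau}^{t}x^{\mathrm T}(s)Px(s)\,\mathrm ds+\sum_{i=1}^{N}\int_{-\tau_i}^{0}\Big(\tfrac{s}{\tau_i}+1\Big)x^{\mathrm T}(t+s)S_ix(t+s)\,\mathrm ds,
\]
with a small $P>0$ supplying the coercive lower bound in (\ref{eq64}) (the $S_i$-terms integrate only over $[-\tau_i,0]$ with a weight vanishing at the left endpoint, so they cannot by themselves bound $\int_{-\tau}^{0}\Vert x\Vert^2$ from below). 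Differentiating by the Leibniz rule and applying the multiple Jensen inequality (\ref{jensen}) with $\omega_i(s)=A_ix(t+s)$ — so that $\sum_i x_i=x(t)$ and $Q=\sum_iQ_i$ — reproduces the chain (\ref{eq47}) and, once $P+\sum_iS_i<Q^{-1}$ is arranged as in (\ref{eq45}), leaves
\[
\dot V(x_t)\le\sum_{i=1}^{N}\frac1{\tau_i}\int_{-\tau_i}^{0}x^{\mathrm T}(t+s)\big(\tau_i^2A_i^{\mathrm T}Q_i^{-1}A_i-S_i\big)x(t+s)\,\mathrm ds.
\]

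The delicate point — and the one I expect to be the main obstacle — is extracting the strict \emph{integral} bound (\ref{eq65}) rather than a merely pointwise $-\delta\Vert x(t)\Vert^2$. This forces each integrand $\tau_i^2A_i^{\mathrm T}Q_i^{-1}A_i-S_i$ to be negative \emph{definite}, which is not implied by the single combined inequality (\ref{eq44}) unless its slack is deliberately pushed into the $S_i$; note the $A_i$ may be singular, so the negativity must be full rank. Writing $G=\big(\sum_iQ_i\big)^{-1}-\sum_i\tau_i^2A_i^{\mathrm T}Q_i^{-1}A_i>0$ (positive by (\ref{eq44})) and choosing $S_i=\tau_i^2A_i^{\mathrm T}Q_i^{-1}A_i+\tfrac1{2N}G$, one checks $S_i>\tau_i^2A_i^{\mathrm T}Q_i^{-1}A_i>0$ and $\sum_iS_i=\tfrac12\sum_i\tau_i^2A_i^{\mathrm T}Q_i^{-1}A_i+\tfrac12Q^{-1}<Q^{-1}$; hence $\tau_i^2A_i^{\mathrm T}Q_i^{-1}A_i-S_i=-\tfrac1{2N}G<0$ uniformly, and $\dot V\le-\gamma\int_{-\tau}^{0}\Vert x(t+s)\Vert^2\,\mathrm ds$ follows. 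Since this $(S_i,Q_i)$ verifies exactly the hypotheses (\ref{NMI1})--(\ref{NMI2}) of Theorem~\ref{th3}, Item~1 may equivalently be phrased as the implication ``(\ref{eq44}) solvable $\Rightarrow$ (\ref{NMI1})--(\ref{NMI2}) solvable'', after which Theorem~\ref{th3} applies verbatim.

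For Item~2 I would bypass Lemma~\ref{lm2} and instead sandwich both inequalities between the single block matrix
\[
M=\begin{bmatrix}\big(\sum_iQ_i\big)^{-1}&\mathcal A^{\mathrm T}\\\mathcal A&\mathrm{diag}(Q_1,\dots,Q_N)\end{bmatrix},\qquad \mathcal A=\begin{bmatrix}\tau_1A_1\\\vdots\\\tau_NA_N\end{bmatrix}\in\mathbf R^{nN\times n}.
\]
Because $\mathcal A^{\mathrm T}\,\mathrm{diag}(Q_1,\dots,Q_N)^{-1}\mathcal A=\sum_i\tau_i^2A_i^{\mathrm T}Q_i^{-1}A_i$ and $\mathcal A\big(\sum_iQ_i\big)\mathcal A^{\mathrm T}=\sum_i\mathcal A Q_i\mathcal A^{\mathrm T}$, the Schur complement of $M$ with respect to its (positive definite) lower-right block returns exactly (\ref{eq44}), while the Schur complement with respect to its (positive definite) upper-left block returns exactly the left-hand side of (\ref{LMI}). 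Thus $M>0$ is equivalent to each of (\ref{eq44}) and (\ref{LMI}) separately, and since $Q_i>0$ in both formulations the two are equivalent as solvability statements. This step is essentially routine once $M$ is written down; the only thing worth checking is that (\ref{LMI}) is genuinely affine in the $Q_i$ — it is, precisely because the inverse $\big(\sum_iQ_i\big)^{-1}$ has been eliminated — so that it is a bona fide LMI.
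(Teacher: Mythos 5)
Your proof is correct, but Item 1 takes a genuinely different route from the paper's. The paper does not reuse the functional (\ref{eq46}) for Theorem \ref{th1}; it introduces two new functionals, $V_{1}$ in (\ref{eqv1}) built from a splitting $\sum_{i}R_{i}=(\sum_{i}Q_{i})^{-1}$ and the double-integral functional $V_{2}$ in (\ref{v2}) carrying a $\delta I_{n}$ perturbation, and shows directly that $\varepsilon\dot{V}_{1}+\dot{V}_{2}\leq-\delta\int_{-\tau}^{0}\Vert x(t+s)\Vert^{2}\mathrm{d}s$ using the slack $\varepsilon$ of (\ref{eq28}). You instead reduce Item 1 to Theorem \ref{th3} by exhibiting $S_{i}=\tau_{i}^{2}A_{i}^{\mathrm{T}}Q_{i}^{-1}A_{i}+\frac{1}{2N}G$; this is exactly the algebra the paper deploys later in the proof of Item 2 of Proposition \ref{coro1} (your $\frac{1}{2N}G$ is the paper's $\Omega$ in (\ref{eq33})), so your argument is sound and in fact delivers half of that proposition for free, and your diagnosis that each $\tau_{i}^{2}A_{i}^{\mathrm{T}}Q_{i}^{-1}A_{i}-S_{i}$ must be negative definite (not merely semidefinite, since $A_{i}$ may be singular) is precisely the issue the paper's $\delta I_{n}$ term is designed to absorb. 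What your route forfeits is the construction of the new functionals themselves, which is the stated purpose of the subsection: the paper advertises (\ref{eqv1}) and (\ref{v2}) as having advantages over (\ref{eqv2}) and (\ref{eq46}), and the later remark on computational complexity rests on this independent derivation. For Item 2 your single block matrix $M$ is a congruence transform, by $\mathrm{diag}((\sum_{i}Q_{i})^{-1},I)$, of the negative of the paper's intermediate matrix (\ref{eqxx}), so the two arguments amount to the same pair of Schur complements; your packaging is marginally cleaner and yields the slightly stronger statement that (\ref{eq44}) and (\ref{LMI}) hold for the \emph{same} $Q_{i}$, rather than mere equivalence of solvability.
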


\begin{proof}
\textit{Proof of Item 1}. Let $Q=%
%TCIMACRO{\tsum \nolimits_{i=1}^{N}}%
%BeginExpansion
{\textstyle \sum \nolimits_{i=1}^{N}}
%EndExpansion
Q_{j}.$ Then it follows from (\ref{eq44}) that there exist two sufficiently
small numbers $\delta>0$ and $\varepsilon>0$ such that%
\begin{equation}
\sum \limits_{i=1}^{N}\left(  \tau_{i}^{2}A_{i}^{\mathrm{T}}Q_{i}^{-1}%
A_{i}+\tau_{i}\delta I_{n}\right)  -Q^{-1}\leq-\varepsilon Q^{-1}.
\label{eq28}%
\end{equation}
Let $R_{i}>0,i\in \mathbf{I}\left[  1,N\right]  ,$ be such that
\begin{equation}
\sum \limits_{i=1}^{N}R_{i}\triangleq R=Q^{-1}=\left(
%TCIMACRO{\dsum \limits_{i=1}^{N}}%
%BeginExpansion
{\displaystyle \sum \limits_{i=1}^{N}}
%EndExpansion
Q_{i}\right)  ^{-1}, \label{eqr}%
\end{equation}
and consider an associated nonnegative functional%
\begin{equation}
V_{1}\left(  x_{t}\right)  =\sum \limits_{i=1}^{N}\int_{t-\tau_{i}}%
^{t}x^{\mathrm{T}}\left(  s\right)  R_{i}x\left(  s\right)  \mathrm{d}s,
\label{eqv1}%
\end{equation}
whose time-derivative is given by%
\begin{align}
&  \dot{V}_{1}\left(  x_{t}\right)  =x^{\mathrm{T}}\left(  t\right)  \left(
\sum \limits_{i=1}^{N}R_{i}\right)  x\left(  t\right)  -\sum \limits_{i=1}%
^{N}y_{i}\nonumber \\
&  =x^{\mathrm{T}}\left(  t\right)  \left(
%TCIMACRO{\dsum \limits_{i=1}^{N}}%
%BeginExpansion
{\displaystyle \sum \limits_{i=1}^{N}}
%EndExpansion
Q_{i}\right)  ^{-1}x\left(  t\right)  -\sum \limits_{i=1}^{N}y_{i}\nonumber \\
&  \leq%
%TCIMACRO{\dsum \limits_{i=1}^{N}}%
%BeginExpansion
{\displaystyle \sum \limits_{i=1}^{N}}
%EndExpansion
\int_{-\tau_{i}}^{0}\tau_{i}x^{\mathrm{T}}\left(  t+s\right)  A_{i}%
^{\mathrm{T}}Q_{i}^{-1}A_{i}x\left(  t+s\right)  \mathrm{d}s-\sum
\limits_{i=1}^{N}y_{i}, \label{eq19}%
\end{align}
where $y_{i}=x\left(  t-\tau_{i}\right)  ^{\mathrm{T}}R_{i}x\left(  t-\tau
_{i}\right)  ,$ and we have used the IDS (\ref{ids}) and the multiple Jensen
inequality (\ref{jensen}).

Choose another nonnegative functional%
\begin{equation}
V_{2}\left(  x_{t}\right)  =\sum \limits_{i=1}^{N}\int_{0}^{\tau_{i}}\int
_{t-s}^{t}x^{\mathrm{T}}\left(  l\right)  \left(  \tau_{i}A_{i}^{\mathrm{T}%
}Q_{i}^{-1}A_{i}+\delta I_{n}\right)  x\left(  l\right)  \mathrm{d}%
l\mathrm{d}s, \label{v2}%
\end{equation}
whose time-derivative can be evaluated as%
\begin{align}
&  \dot{V}_{2}\left(  x_{t}\right)  =\sum \limits_{i=1}^{N}x^{\mathrm{T}%
}\left(  t\right)  \left(  \tau_{i}^{2}A_{i}^{\mathrm{T}}Q_{i}^{-1}%
A_{i}+\delta \tau_{i}I_{n}\right)  x\left(  t\right) \nonumber \\
&  -\sum \limits_{i=1}^{N}\int_{-\tau_{i}}^{0}x^{\mathrm{T}}\left(  t+s\right)
\left(  \tau_{i}A_{i}^{\mathrm{T}}Q_{i}^{-1}A_{i}+\delta I_{n}\right)
x\left(  t+s\right)  \mathrm{d}s. \label{dv2}%
\end{align}
Hence, it follows from (\ref{eq19}) and (\ref{dv2}) that%
\begin{align}
&  \dot{V}_{1}\left(  x_{t}\right)  +\dot{V}_{2}\left(  x_{t}\right)
\nonumber \\
&  \leq x^{\mathrm{T}}\left(  t\right)  \sum \limits_{i=1}^{N}\left(  \tau
_{i}^{2}A_{i}^{\mathrm{T}}Q_{i}^{-1}A_{i}+\delta \tau_{i}I_{n}\right)  x\left(
t\right)  -\sum \limits_{i=1}^{N}y_{i}-\mu \nonumber \\
&  \leq \left(  1-\varepsilon \right)  x^{\mathrm{T}}\left(  t\right)
Q^{-1}x\left(  t\right)  -\sum \limits_{i=1}^{N}y_{i}-\mu \nonumber \\
&  \leq \left(  1-\varepsilon \right)  \sum \limits_{i=1}^{N}\left(
x^{\mathrm{T}}\left(  t\right)  R_{i}x\left(  t\right)  -x\left(  t-\tau
_{i}\right)  ^{\mathrm{T}}R_{i}x\left(  t-\tau_{i}\right)  \right)
-\mu \nonumber \\
&  =\left(  1-\varepsilon \right)  \dot{V}_{1}\left(  x_{t}\right)  -{\delta
\sum \limits_{i=1}^{N}\int_{-\tau_{i}}^{0}\left \Vert x\left(  t+s\right)
\right \Vert ^{2}\mathrm{d}s}, \label{eq34}%
\end{align}
where {$\mu=\delta \sum \limits_{i=1}^{N}\int_{-\tau_{i}}^{0}\left \Vert x\left(
t+s\right)  \right \Vert ^{2}\mathrm{d}s$} and we have used (\ref{eq28}).
Therefore%
\begin{equation}
\dot{V}\left(  x_{t}\right)  \triangleq \varepsilon \dot{V}_{1}\left(
x_{t}\right)  +\dot{V}_{2}\left(  x_{t}\right)  \leq-{\delta \int_{-\tau}%
^{0}\left \Vert x\left(  t+s\right)  \right \Vert ^{2}\mathrm{d}s.} \label{eq31}%
\end{equation}
Finally, it is trivial to show that $V\left(  x_{t}\right)  $ satisfies
(\ref{eq64}). The conclusion
%exponential stability of the IDS (\ref{ids})
then follows from Lemma \ref{lm0}.

\textit{Proof of Item 2}. By a Schur complement, the LMI in (\ref{LMI}) is
satisfied if and only if{
\begin{equation}
\left[
\begin{BMAT}(@){c|ccc}{c|ccc} -Q & \tau_{1}QA_{1}^{\mathrm{T}} & \cdots & \tau_{N}QA_{N}^{\mathrm{T}}\\ \tau_{1}A_{1}Q & -Q_{1} & &  \\ \vdots & & \ddots & \\ \tau_{N}A_{N}Q & & & -Q_{N}\end{BMAT}\right]
<0. \label{eqxx}
\end{equation}}
By using a Schur complement again, the inequality (\ref{eqxx}) can be
equivalently transformed into
\begin{equation}
-Q+\sum \limits_{i=1}^{N}\tau_{i}^{2}QA_{i}^{\mathrm{T}}Q_{i}^{-1}A_{i}Q<0,
\label{eq29}%
\end{equation}
which is further equivalent to {(\ref{eq44})}. The proof is finished.
\end{proof}

At the end of this section, we will show that the stability condition
(\ref{LMI}) has a very interesting relationship with the stability condition
of the following IDS
\begin{equation}
x\left(  t\right)  =\sum \limits_{i=1}^{N}A_{i}x\left(  t-\tau_{i}\right)  ,
\label{idslaa}%
\end{equation}
which was originally studied in \cite{carvalho96laa} by using a Lyapunov
functional approach, where $A_{i}\in \mathbf{R}^{n\times n},i\in \mathbf{I}%
\left[  1,N\right]  ,$ are given matrices and $\tau_{i},i\in \mathbf{I}\left[
1,N\right]  ,$ are given scalars and are such that $0<\tau_{1}<\cdots<\tau
_{N}.$

\begin{lemma}
\cite{carvalho96laa} \label{lm4}The IDS (\ref{idslaa}) is exponentially stable
independent of the delays $\tau_{i},i\in \mathbf{I}\left[  1,N\right]  ,$ if
there exist $N$ positive definite matrix $Q_{i}\in \mathbf{R}^{n\times n}%
,i\in \mathbf{I}\left[  1,N\right]  ,$ such that the following LMI is satisfied%
\begin{equation}%
%TCIMACRO{\dsum \limits_{i=1}^{N}}%
%BeginExpansion
{\displaystyle \sum \limits_{i=1}^{N}}
%EndExpansion
\left[
\begin{array}
[c]{c}%
A_{1}\\
\vdots \\
A_{N}%
\end{array}
\right]  Q_{i}\left[
\begin{array}
[c]{c}%
A_{1}\\
\vdots \\
A_{N}%
\end{array}
\right]  ^{\mathrm{T}}-\left[
\begin{array}
[c]{ccc}%
Q_{1} &  & \\
& \ddots & \\
&  & Q_{N}%
\end{array}
\right]  <0. \label{lmilaa}%
\end{equation}

\end{lemma}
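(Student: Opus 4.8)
The plan is to mirror the reasoning behind Theorem~\ref{th1}, after noting that the LMI~(\ref{lmilaa}) is precisely the LMI~(\ref{LMI}) with each $\tau_iA_i$ replaced by $A_i$. First I would collapse the common outer factors: writing $\mathbf{A}=[A_1^{\mathrm T}\ \cdots\ A_N^{\mathrm T}]^{\mathrm T}$ and $Q=\sum_{i=1}^{N}Q_i$, the index-independence of $\mathbf{A}$ gives $\sum_{i=1}^{N}\mathbf{A}Q_i\mathbf{A}^{\mathrm T}=\mathbf{A}Q\mathbf{A}^{\mathrm T}$, so (\ref{lmilaa}) is just $\mathbf{A}Q\mathbf{A}^{\mathrm T}<\mathrm{diag}(Q_1,\dots,Q_N)$. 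Two Schur-complement steps, identical to those in the proof of Theorem~\ref{th1} (Item~2) with $\tau_i=1$, then reduce it to the single nonlinear matrix inequality
\begin{equation*}
\sum_{i=1}^{N}A_i^{\mathrm T}Q_i^{-1}A_i<\Big(\sum_{i=1}^{N}Q_i\Big)^{-1}=Q^{-1},\tag{$\star$}
\end{equation*}
so I may work from $(\star)$ from here on.

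Next I would introduce a Lyapunov--Krasovskii functional adapted to the purely discrete delays of (\ref{idslaa}), namely $V(x_t)=\sum_{i=1}^{N}\int_{t-\tau_i}^{t}x^{\mathrm T}(s)R_ix(s)\,\mathrm ds$, and choose the positive definite weights $R_i$ so as to split $Q^{-1}$ with slack: set $M\triangleq Q^{-1}-\sum_{j=1}^{N}A_j^{\mathrm T}Q_j^{-1}A_j$, which is positive definite by $(\star)$, and take $R_i=A_i^{\mathrm T}Q_i^{-1}A_i+\tfrac1N M>0$ so that $\sum_{i=1}^{N}R_i=Q^{-1}$. Differentiating along solutions, $\dot V(x_t)=x^{\mathrm T}(t)Q^{-1}x(t)-\sum_{i=1}^{N}x^{\mathrm T}(t-\tau_i)R_ix(t-\tau_i)$. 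Now I would apply the multiple discrete Jensen inequality (\ref{mdjensen}) with $\xi_i=A_ix(t-\tau_i)$ together with the defining relation $x(t)=\sum_{i=1}^{N}A_ix(t-\tau_i)$ to bound the first term by $\sum_{i=1}^{N}x^{\mathrm T}(t-\tau_i)A_i^{\mathrm T}Q_i^{-1}A_ix(t-\tau_i)$; the chosen split then collapses everything to
\begin{equation*}
\dot V(x_t)\le-\tfrac1N\sum_{i=1}^{N}x^{\mathrm T}(t-\tau_i)Mx(t-\tau_i)\le0,
\end{equation*}
which is strictly negative whenever some $x(t-\tau_i)\ne0$.

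The hard part will be turning this dissipation estimate into genuine \emph{exponential} decay that is uniform in the delays, since $V$ is an integral while $\dot V$ is controlled only by the point values $x(t-\tau_i)$, so monotonicity of $V$ by itself does not yield an inequality of the form $\dot V\le-\eta V$. My plan is to handle this through an exponential change of variables: putting $y(t)=\mathrm e^{\sigma t}x(t)$ transforms (\ref{idslaa}) into the difference equation $y(t)=\sum_{i=1}^{N}(\mathrm e^{\sigma\tau_i}A_i)y(t-\tau_i)$, and because $(\star)$ is strict and $\mathrm e^{\sigma\tau_i}A_i\to A_i$ as $\sigma\downarrow0$, for each fixed delay vector there is a $\sigma>0$ for which $(\star)$ still holds with $A_i$ replaced by $\mathrm e^{\sigma\tau_i}A_i$; the boundedness of $y$ then forces $\|x(t)\|=\mathrm e^{-\sigma t}\|y(t)\|$ to decay exponentially. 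The cleanest way to extract that boundedness is to observe that $(\star)$ already implies the frequency-domain condition $\max_{|z_i|=1}\rho\!\big(\sum_{i=1}^{N}A_iz_i\big)<1$: applying the Hermitian analogue of (\ref{mdjensen}) to $\xi_i=z_iA_iv$ for an eigenpair $(\lambda,v)$ of $\sum_iA_iz_i$ shows $|\lambda|^2v^{*}Q^{-1}v\le v^{*}\big(\sum_iA_i^{\mathrm T}Q_i^{-1}A_i\big)v<v^{*}Q^{-1}v$, hence $|\lambda|<1$, and compactness of the torus makes the maximum strict. This spectral condition is exactly the classical criterion for delay-independent exponential stability of (\ref{idslaa}), which I would invoke to conclude.
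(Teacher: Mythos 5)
Your argument is correct, but it takes a genuinely different route from the paper's. The paper proves Lemma \ref{lm4} by a pure reduction to Carvalho \cite{carvalho96laa}: it first notes, via the characteristic polynomial, that (\ref{idslaa}) and its transposed version are simultaneously exponentially stable, then quotes Carvalho's LMI (4.6) in variables $X_{1}>\cdots>X_{N}>0$, and observes that the substitution $Q_{N}=X_{N}$, $Q_{i}=X_{i}-X_{i+1}$ turns that LMI exactly into (\ref{lmilaa}); no Lyapunov functional or frequency-domain analysis appears. You instead collapse (\ref{lmilaa}) by two Schur complements (Item 2 of Theorem \ref{th1} with $\tau_{i}=1$) to the nonlinear inequality $\sum_{i}A_{i}^{\mathrm{T}}Q_{i}^{-1}A_{i}<(\sum_{i}Q_{i})^{-1}$, derive from it, via the Hermitian form of (\ref{mdjensen}) applied to $\xi_{i}=z_{i}A_{i}v$, the frequency-sweeping condition $\sup_{|z_{i}|=1}\rho(\sum_{i}A_{i}z_{i})<1$, and invoke the classical Cruz--Hale/Henry/Melvin criterion for delay-independent exponential stability of continuous-time difference equations. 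Both proofs ultimately outsource the hard analysis to an external theorem (Carvalho's LMI result for the paper, the strong-stability criterion for you), but yours has the merit of exposing the link between (\ref{lmilaa}) and a spectral condition, in the same spirit as Theorem \ref{th2}. Two caveats: (i) your Lyapunov--Krasovskii segment is correct but, as you yourself observe, only gives $\dot V\le 0$ in terms of the point values $x(t-\tau_{i})$ and cannot be closed to exponential decay (Lemma \ref{lm0} does not apply to (\ref{idslaa})); once you switch to the frequency-domain argument, that segment and the $y(t)=\mathrm{e}^{\sigma t}x(t)$ detour carry no weight, and the actual proof is the chain LMI $\Rightarrow$ nonlinear inequality $\Rightarrow$ spectral condition $\Rightarrow$ classical theorem; (ii) your Jensen computation in fact yields $\rho(\sum_{i}A_{i}z_{i})<1$ on the whole closed polydisk $|z_{i}|\le 1$, which is the form directly needed to exclude characteristic roots with nonnegative real part, and is worth stating explicitly when you invoke the classical result.
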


\begin{proof}
By recognizing the characteristic polynomial of the IDS (\ref{idslaa}) (see
Eq. (3.7) in \cite{carvalho96laa}), it is not hard to see that it is
exponentially stable if and only if the following IDS%
\begin{equation}
x\left(  t\right)  =\sum \limits_{i=1}^{N}A_{i}^{\mathrm{T}}x\left(  t-\tau
_{i}\right)  , \label{idslaa1}%
\end{equation}
is. Then by the results in Section 4 in \cite{carvalho96laa}, the IDS
(\ref{idslaa1}) is exponentially stable independent of the delays $\tau
_{i},i\in \mathbf{I}\left[  1,N\right]  ,$ if there exist $N$ positive definite
matrices $X_{i},i\in \mathbf{I}\left[  1,N\right]  ,$ such that the following
LMI is satisfied (see inequality (4.6) in \cite{carvalho96laa})%
\begin{equation}
\left[
\begin{array}
[c]{cccc}%
\Pi_{1} & -A_{1}X_{1}A_{2}^{\mathrm{T}} & \cdots & -A_{1}X_{1}A_{N}%
^{\mathrm{T}}\\
-A_{2}X_{1}A_{1}^{\mathrm{T}} & \Pi_{2} & \cdots & -A_{2}X_{1}A_{N}%
^{\mathrm{T}}\\
\vdots & \vdots & \ddots & \vdots \\
-A_{N}X_{1}A_{1}^{\mathrm{T}} & -A_{N}X_{1}A_{2}^{\mathrm{T}} & \cdots &
X_{N}-A_{N}X_{1}A_{N}^{\mathrm{T}}%
\end{array}
\right]  >0, \label{lmi3}%
\end{equation}
where $\Pi_{i}=X_{i}-A_{i}X_{1}A_{i}^{\mathrm{T}}-X_{i+1},i\in \mathbf{I}%
\left[  1,N-1\right]  $. As the above LMI implies $X_{i}>X_{i+1}%
,i\in \mathbf{I}\left[  1,N-1\right]  ,$ we can let $Q_{N}=X_{N}$ and
$Q_{i}=X_{i}-X_{i+1}>0,\;i\in \mathbf{I}\left[  1,N-1\right]  .\label{Qii}$
Then the LMI in (\ref{lmi3}) can be exactly rewritten as (\ref{lmilaa}).
%, which
%completes the proof.

\end{proof}

It is very interesting to notice that the LMI (\ref{LMI}) and the LMI
(\ref{lmilaa}) possess the very similar structures and the only difference is
that the former LMI is delay dependent and the later one is not. This
similarity may help us to understand the stability of these two classes of
IDSs (\ref{ids}) and (\ref{idslaa})

\section{\label{sec3}A Spectral Radius Based Condition and A Comparison}

\subsection{A Spectral Radius Based Stability Condition}

In this subsection, we will present a spectral radius based sufficient
condition for the exponential stability of the IDS (\ref{ids}) based on Lemma
\ref{lm1}, as indicated by the following theorem.

\begin{theorem}
\label{th2}
%Let $A_{i},\tau_{i},i\in \mathbf{I}\left[  1,N\right]  ,$ be given
%in the IDS (\ref{ids}). Then
The following statements are equivalent.

\begin{enumerate}
\item[A.] There exist $N+1$ positive definite matrices $P,Q_{i}\in
\mathbf{R}^{n\times n},i\in \mathbf{I}\left[  1,N\right]  ,$ such that the
coupled LMIs in (\ref{LMIamc}) are satisfied.

\item[B.] There exist $N$ positive definite matrices $Q_{i}\in \mathbf{R}%
^{n\times n},i\in \mathbf{I}\left[  1,N\right]  ,$ such that the following
coupled LMIs are satisfied%
\begin{equation}
N\tau_{i}^{2}A_{i}^{\mathrm{T}}\left(  \sum \limits_{j=1}^{N}Q_{j}\right)
A_{i}-Q_{i}<0,\;i\in \mathbf{I}\left[  1,N\right]  . \label{eq35}%
\end{equation}

\item[C.] The following condition is met, where $\rho \left(  A\right)  $
denotes the spectral radius of a square matrix $A:$%
\begin{equation}
\rho \left(  \sum \limits_{i=1}^{N}\tau_{i}^{2}A_{i}\otimes A_{i}\right)
<\frac{1}{N}. \label{eq43}%
\end{equation}

\item[D.] There exists a positive definite matrix $Q\in \mathbf{R}^{n\times n}$
such that the following LMI is satisfied
\begin{equation}
\sum \limits_{i=1}^{N}N\tau_{i}^{2}A_{i}^{\mathrm{T}}QA_{i}-Q<0. \label{eq30}%
\end{equation}

\end{enumerate}
\end{theorem}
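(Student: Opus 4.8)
The plan is to prove the four-way equivalence by establishing a cycle of implications, exploiting the fact that three of the four statements share a common structural feature: the matrix $\sum_{j} Q_j$ appears in statements A and B, suggesting that $P$ in statement A can be absorbed into the $Q_j$'s. I would organize the argument as $\text{A} \Leftrightarrow \text{B}$, then $\text{B} \Leftrightarrow \text{D}$, and finally $\text{B} \Leftrightarrow \text{C}$ (or $\text{D} \Leftrightarrow \text{C}$), since each adjacent pair is linked by an elementary transformation.

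First I would show $\text{A} \Rightarrow \text{B}$. Starting from (\ref{LMIamc}), note that $P + \sum_j \tau_j Q_j > \sum_j \tau_j Q_j$, but to reach the clean form (\ref{eq35}) I would rescale: the natural move is to replace the matrices $\tau_i Q_i$ in (\ref{LMIamc}) by new variables. Specifically, setting $\tilde{Q}_i = \tau_i Q_i$ converts $\sum_j \tau_j Q_j$ into $\sum_j \tilde{Q}_j$ and $Q_i$ into $\tfrac{1}{\tau_i}\tilde{Q}_i$; combined with absorbing the constant $P$ (which can be made arbitrarily small, as in the proof of Theorem \ref{th3}, or folded into one of the $Q_j$), this should yield (\ref{eq35}) up to the factor $\tau_i$. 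For the converse $\text{B} \Rightarrow \text{A}$, I would simply choose $P = \varepsilon I_n$ for sufficiently small $\varepsilon > 0$ and reverse the substitution, using that strict inequalities are preserved under small perturbations. The role of $P$ is cosmetic: it witnesses the strict-versus-nonstrict gap, exactly as $P \le \varepsilon I_n$ is used in (\ref{eq45}).

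Next, for $\text{B} \Leftrightarrow \text{D}$, I would argue that $\text{D}$ is the special case of $\text{B}$ with all $Q_i$ equal (take $Q_i = Q$, so $\sum_j Q_j = NQ$, and (\ref{eq35}) becomes $N^2 \tau_i^2 A_i^{\mathrm{T}} Q A_i < Q$, which after summing is close to (\ref{eq30})); conversely, summing the $N$ inequalities in (\ref{eq35}) and setting $Q = \sum_j Q_j$ gives $N \sum_i \tau_i^2 A_i^{\mathrm{T}} Q A_i - Q < 0$, which is exactly (\ref{eq30}). The subtle direction is $\text{D} \Rightarrow \text{B}$: from a single $Q$ I must manufacture $N$ distinct matrices $Q_i$ summing appropriately, and I expect to define $Q_i$ via the left-hand side of (\ref{eq35}), i.e. $Q_i \triangleq$ something slightly larger than $N\tau_i^2 A_i^{\mathrm{T}} Q A_i$, then verify consistency of $\sum_j Q_j = Q$ by a fixed-point or scaling argument.

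The genuinely new ingredient, and the main obstacle, is the equivalence with the spectral-radius condition $\text{C}$. Here I would invoke the positive operator theory advertised in the introduction: the map $\mathcal{L}(X) = N\sum_i \tau_i^2 A_i^{\mathrm{T}} X A_i$ is a positive linear operator on the cone of symmetric positive semidefinite matrices, and condition $\text{D}$ (existence of $Q > 0$ with $\mathcal{L}(Q) < Q$) is precisely the statement that $\mathcal{L}$ has spectral radius less than one — a standard Perron--Frobenius-type characterization for positive operators. The remaining task is to identify the spectral radius of $\mathcal{L}$ with the quantity in (\ref{eq43}): using the vectorization identity $\mathrm{vec}(A_i^{\mathrm{T}} X A_i) = (A_i^{\mathrm{T}} \otimes A_i^{\mathrm{T}})\mathrm{vec}(X)$, the operator $\mathcal{L}$ is represented by the matrix $N\sum_i \tau_i^2 (A_i \otimes A_i)$ (up to transpose, which leaves the spectrum unchanged), so $\rho(\mathcal{L}) < 1$ is equivalent to $\rho\!\left(\sum_i \tau_i^2 A_i \otimes A_i\right) < \tfrac{1}{N}$. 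The hard part will be stating cleanly which positive-operator theorem guarantees that $\rho(\mathcal{L}) < 1$ is equivalent to the solvability of the Lyapunov-type inequality $\mathcal{L}(Q) < Q$ for some $Q > 0$, and confirming that the self-adjointness of $\mathcal{L}$ on the symmetric matrices does not spoil the correspondence between its operator spectrum and the spectrum of the Kronecker representation.
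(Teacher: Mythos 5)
Your proposal is correct, and the only substantive difference from the paper lies in how the four statements are chained together. The paper proves A$\Leftrightarrow$B exactly as you do (drop $P$ by taking $P=\varepsilon I_{n}$, then substitute $\tau_{i}Q_{i}\rightarrow Q_{i}$), but then goes B$\Leftrightarrow$C$\Leftrightarrow$D, establishing B$\Leftrightarrow$C via a positive operator $\mathscr{L}$ acting on the $N$-tuple $\left(Q_{1},\ldots,Q_{N}\right)$, which forces it to compute the spectral radius of an $Nn^{2}\times Nn^{2}$ block matrix and reduce it to $\rho\bigl(\sum_{i}\tau_{i}^{2}A_{i}^{\mathrm{T}}\otimes A_{i}^{\mathrm{T}}\bigr)$ through a rank-one-type factorization $A=BC$ and the identity $\rho\left(BC\right)=\rho\left(CB\right)$; C$\Leftrightarrow$D is then done "similarly" with the single-matrix operator $\mathscr{F}\left(Q\right)=\sum_{i}N\tau_{i}^{2}A_{i}^{\mathrm{T}}QA_{i}$. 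You instead link B and D directly by elementary algebra — summing (\ref{eq35}) with $Q=\sum_{j}Q_{j}$ gives (\ref{eq30}), and conversely splitting $Q$ as $Q_{i}=N\tau_{i}^{2}A_{i}^{\mathrm{T}}QA_{i}+\Omega$ with $\Omega=\frac{1}{N}\bigl(Q-N\sum_{i}\tau_{i}^{2}A_{i}^{\mathrm{T}}QA_{i}\bigr)>0$ recovers (\ref{eq35}) (this is the same splitting device the paper uses in the proof of Proposition \ref{coro1}, and it makes your "fixed-point or scaling argument" entirely explicit) — and then only need the positive-operator/Perron--Frobenius characterization once, for the single-matrix operator $\mathscr{F}$, whose Kronecker representation is immediate. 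This buys a shorter and more elementary argument that avoids the tuple operator and the $\rho\left(BC\right)=\rho\left(CB\right)$ computation entirely; what it loses is only the explicit operator formulation (\ref{eq70}) of the coupled system B, which the paper seems to value for its own sake. One point you flag as a worry — that the operator acts on the $\frac{n(n+1)}{2}$-dimensional space of symmetric matrices while the Kronecker matrix acts on $\mathbf{R}^{n^{2}}$ — is real but standard: the symmetric matrices form an invariant subspace containing the Perron eigenvector of the positive operator, so the spectral radii agree; the paper glosses over this in the same way by citing \cite{lzwd11tac}.
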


\begin{proof}
\textit{Proof of A}$\Leftrightarrow$\textit{B}. It is clear that\ the LMIs in
(\ref{LMIamc}) are satisfied if we choose $P=\varepsilon I_{n}$ with
$\varepsilon$ being sufficiently small and the following coupled LMIs%
\begin{equation}
N\tau_{i}A_{i}^{\mathrm{T}}\left(  \sum \limits_{j=1}^{N}\tau_{j}Q_{j}\right)
A_{i}-Q_{i}<0,\;i\in \mathbf{I}\left[  1,N\right]  , \label{eq60}%
\end{equation}
are satisfied. The converse is obvious. Finally, the LMIs (\ref{eq60}) are
equivalent to (\ref{eq35}) by the substitution $\tau_{i}%
Q_{i}\rightarrow Q_{i},i\in \mathbf{I}\left[  1,N\right]  .$

\textit{Proof of B}$\Leftrightarrow$\textit{C}. Let $\mathcal{S}_{+}^{n\times
n}=\left(  S_{1},S_{2},\cdots,S_{N}\right)  $ where $S_{i}\in \mathbf{S}%
_{+}^{n\times n}\triangleq \{S:S=S^{\mathrm{T}}>0\}.$ Then
%the linear operator%
\begin{equation}
\mathscr{L}\left(  \mathcal{Q}\right)  =\left(  N\tau_{1}^{2}A_{1}%
^{\mathrm{T}}\sum \limits_{j=1}^{N}Q_{j}A_{1},\cdots,N\tau_{N}^{2}%
A_{N}^{\mathrm{T}}\sum \limits_{j=1}^{N}Q_{j}A_{N}\right)  , \label{eq70}%
\end{equation}
where $\mathcal{Q}=\left(  Q_{1},\cdots,Q_{N}\right)  \in \mathcal{S}%
_{+}^{n\times n},$ is a linear positive operator (see Definition 1 in
\cite{lzwd11tac}). Consequently, the inequalities in (\ref{eq35}) are
satisfied if and only if there exists a $\mathcal{Q}\in \mathcal{S}%
_{+}^{n\times n}$ such that%
\begin{equation}
\mathscr{L}\left(  \mathcal{Q}\right)  -\mathcal{Q}<0, \label{eq50}%
\end{equation}
where $\mathcal{P}<0$ means $-\mathcal{P}\in \mathcal{S}_{+}^{n\times n}.$ Then
by Lemma 1 in \cite{lzlw11amc}, the inequality in (\ref{eq50}) has a solution
$\mathcal{Q}\in \mathcal{S}_{+}^{n\times n}$ if and only if $\rho \left(
\mathscr{L}\right)  <1.$ However, similar to (10)-(11) in \cite{lzwd11tac}, we
can show that $\rho \left(  \mathscr{L}\right)  =\rho \left(  NA^{\mathrm{T}%
}\right)  =\rho \left(  NA\right)  $ with
\[
A=\left[
\begin{array}
[c]{cccc}%
\tau_{1}^{2}A_{1}^{\mathrm{T}}\otimes A_{1}^{\mathrm{T}} & \tau_{1}^{2}%
A_{1}^{\mathrm{T}}\otimes A_{1}^{\mathrm{T}} & \cdots & \tau_{1}^{2}%
A_{1}^{\mathrm{T}}\otimes A_{1}^{\mathrm{T}}\\
\tau_{2}^{2}A_{2}^{\mathrm{T}}\otimes A_{2}^{\mathrm{T}} & \tau_{2}^{2}%
A_{2}^{\mathrm{T}}\otimes A_{2}^{\mathrm{T}} & \cdots & \tau_{2}^{2}%
A_{2}^{\mathrm{T}}\otimes A_{2}^{\mathrm{T}}\\
\vdots & \vdots & \ddots & \vdots \\
\tau_{N}^{2}A_{N}^{\mathrm{T}}\otimes A_{N}^{\mathrm{T}} & \tau_{N}^{2}%
A_{N}^{\mathrm{T}}\otimes A_{N}^{\mathrm{T}} & \cdots & \tau_{N}^{2}%
A_{N}^{\mathrm{T}}\otimes A_{N}^{\mathrm{T}}%
\end{array}
\right]  .
\]
Hence the LMIs in (\ref{eq35}) are solvable if and only if%
\begin{equation}
\rho \left(  A\right)  <\frac{1}{N}. \label{rho}%
\end{equation}
Now notice that we can write $A=BC$ where
\begin{equation}
B=\left[
\begin{array}
[c]{c}%
\tau_{1}^{2}A_{1}^{\mathrm{T}}\otimes A_{1}^{\mathrm{T}}\\
\vdots \\
\tau_{N}^{2}A_{N}^{\mathrm{T}}\otimes A_{N}^{\mathrm{T}}%
\end{array}
\right]  ,\;C=\left[
\begin{array}
[c]{cccc}%
I_{n^{2}} & I_{n^{2}} & \cdots & I_{n^{2}}%
\end{array}
\right]  . \label{bc}%
\end{equation}
On the other hand, for any two matrices $X$ and $Y$ with appropriate
dimensions, we have $\rho \left(  XY\right)  =\rho \left(  YX\right)  .$ Hence%
\begin{equation}
\rho \left(  A\right)  =\rho \left(  BC\right)  =\rho \left(  CB\right)
=\rho \left(  \sum \limits_{i=1}^{N}\tau_{i}^{2}A_{i}^{\mathrm{T}}\otimes
A_{i}^{\mathrm{T}}\right)  , \label{rhoab}%
\end{equation}
by which the inequality in (\ref{rho}) is exactly the one in (\ref{eq43}).

\textit{Proof of C}$\Leftrightarrow$\textit{D}. The proof is quite similar to
the proof of B$\Leftrightarrow$C by utilizing the linear positive operator%
\begin{equation}
\mathscr{F}\left(  Q\right)  =\sum \limits_{i=1}^{N}N\tau_{i}^{2}%
A_{i}^{\mathrm{T}}QA_{i}, \label{eq491}%
\end{equation}
where $Q\in \mathbf{S}_{+}^{n\times n}$. The proof is finished.
\end{proof}

%\begin{remark}
%Summing the $N$ inequalities in (\ref{eq35}) and by denoting $Q=%
%%TCIMACRO{\tsum \nolimits_{i=1}^{N}}%
%%BeginExpansion
%{\textstyle \sum \nolimits_{i=1}^{N}}
%%EndExpansion
%Q_{j}>0$ give (\ref{eq30}). Hence the LMIs (\ref{eq35}) are restrictive than
%the LMI (\ref{eq30}); yet they are equivalent by Theorem \ref{th2}.
%\end{remark}

Item C of Theorem \ref{th2} implies an interesting spectral radius based
sufficient condition for testing the stability of the IDS (\ref{ids}), as
highlighted in the following corollary.

\begin{corollary}
\label{coro2}The IDS (\ref{ids}) is exponentially stable if the spectral
radius condition (\ref{eq43}) is satisfied. Particularly, if $N=1,$ the IDS
(\ref{ids}) is exponentially stable if $\rho \left(  A_{1}\right)  <\frac
{1}{\tau_{1}}.\label{eq55}$
\end{corollary}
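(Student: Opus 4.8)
The plan is to read off both assertions directly from Theorem \ref{th2} together with Lemma \ref{lm1}, so that essentially no new argument is needed beyond a short eigenvalue computation in the scalar-count case $N=1$.

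First I would handle the general statement. The hypothesis (\ref{eq43}) is literally Item C of Theorem \ref{th2}. By the equivalence C$\Leftrightarrow$A proved there, (\ref{eq43}) guarantees the existence of $N+1$ positive definite matrices $P,Q_{i}\in\mathbf{R}^{n\times n}$, $i\in\mathbf{I}\left[1,N\right]$, satisfying the coupled LMIs (\ref{LMIamc}). Lemma \ref{lm1} then asserts that the mere solvability of (\ref{LMIamc}) is already sufficient for the exponential stability of the IDS (\ref{ids}). Chaining these two facts yields the claim.

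Next I would treat the special case $N=1$. Here (\ref{eq43}) reduces to $\rho\left(\tau_{1}^{2}A_{1}\otimes A_{1}\right)<1$. I would invoke the standard Kronecker-product spectral fact: if $A_{1}$ has eigenvalues $\lambda_{1},\dots,\lambda_{n}$, then the eigenvalues of $A_{1}\otimes A_{1}$ are exactly the products $\lambda_{j}\lambda_{k}$, so that $\rho\left(A_{1}\otimes A_{1}\right)=\left(\max_{j}\left\vert\lambda_{j}\right\vert\right)^{2}=\rho\left(A_{1}\right)^{2}$. Hence $\rho\left(\tau_{1}^{2}A_{1}\otimes A_{1}\right)=\tau_{1}^{2}\rho\left(A_{1}\right)^{2}$, and the condition becomes $\tau_{1}^{2}\rho\left(A_{1}\right)^{2}<1$, i.e.\ $\rho\left(A_{1}\right)<1/\tau_{1}$, as asserted.

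There is no genuine obstacle: all the analytic work, namely the operator-theoretic equivalences and the underlying Lyapunov argument, has already been discharged in Theorem \ref{th2} and Lemma \ref{lm1}. The only point requiring any care is the spectral radius identity for $N=1$, which is routine once the eigenvalue structure of the Kronecker product $A_{1}\otimes A_{1}$ is recalled.
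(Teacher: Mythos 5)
Your argument is correct and is exactly the route the paper intends: condition (\ref{eq43}) is Item C of Theorem \ref{th2}, which is equivalent to Item A, i.e.\ solvability of the LMIs (\ref{LMIamc}), and Lemma \ref{lm1} then gives exponential stability; the $N=1$ specialization via the Kronecker-product eigenvalue identity $\rho\left(A_{1}\otimes A_{1}\right)=\rho\left(A_{1}\right)^{2}$ is the standard computation the paper relies on. No gaps.
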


\begin{remark}
If $N=1,$ by Proposition 2 in \cite{Aguliar10AMC}, the IDS (\ref{ids}) is
exponentially stable if $\left \Vert A_{1}\right \Vert <\frac{1}{\tau_{1}},$
which is {more} conservative than $\rho \left(  A_{1}\right)  <\frac
{1}{\tau_{1}}$ since $\rho \left(  A_{1}\right)
\leq \left \Vert A_{1}\right \Vert $ for any
%square
matrix $A_{1}.$
\end{remark}

\begin{corollary}
\label{coro4}{The IDS (\ref{ids}) is exponentially stable if there
exist $N$ scalars $\alpha_{i}\in \left(  0,1\right)  ,i\in \mathbf{I}\left[  1,N\right]
$ such that $\textstyle \sum \nolimits_{i=1}^{N}
\alpha_{i}=1$ and
\begin{equation}
\rho \left(  \sum \limits_{i=1}^{N}\frac{\tau_{i}^{2}}{\alpha_{i}}A_{i}\otimes
A_{i}\right)  <1. \label{eq74}%
\end{equation}}

\end{corollary}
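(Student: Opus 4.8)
The plan is to derive (\ref{eq74}) as a consequence of the second sufficient condition in Theorem \ref{th1}, specialized to weighting matrices that are common up to scalar factors, and then to convert the resulting single LMI into a spectral radius condition by the positive operator machinery already employed in the proof of Theorem \ref{th2}.

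First I would restrict the free matrices in the nonlinear inequality (\ref{eq44}) to the form $Q_{i}=\alpha_{i}Q$ for a single positive definite matrix $Q$ and the given scalars $\alpha_{i}\in(0,1)$ with $\sum_{i=1}^{N}\alpha_{i}=1$. With this choice one has $\sum_{i=1}^{N}Q_{i}=Q$ and $Q_{i}^{-1}=\alpha_{i}^{-1}Q^{-1}$, so writing $P=Q^{-1}$ the inequality (\ref{eq44}) collapses to the single LMI $\sum_{i=1}^{N}\frac{\tau_{i}^{2}}{\alpha_{i}}A_{i}^{\mathrm{T}}PA_{i}-P<0$ in the unknown $P>0$. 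Since Item 1 of Theorem \ref{th1} only requires the \emph{existence} of admissible weighting matrices, producing a solution of this special form is enough to certify exponential stability; no generality is lost by the restriction.

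Next I would introduce the linear positive operator $\mathscr{G}(P)=\sum_{i=1}^{N}\frac{\tau_{i}^{2}}{\alpha_{i}}A_{i}^{\mathrm{T}}PA_{i}$ on $\mathbf{S}_{+}^{n\times n}$, exactly as $\mathscr{F}$ was used in the proof of C$\Leftrightarrow$D in Theorem \ref{th2}. The single LMI above is then $\mathscr{G}(P)-P<0$, and by Lemma 1 in \cite{lzlw11amc} it admits a solution $P>0$ if and only if $\rho(\mathscr{G})<1$. The final step is the Kronecker-product identification of $\rho(\mathscr{G})$: repeating the computation (\ref{rhoab}) verbatim with $N\tau_{i}^{2}$ replaced by $\tau_{i}^{2}/\alpha_{i}$ gives $\rho(\mathscr{G})=\rho(\sum_{i=1}^{N}\frac{\tau_{i}^{2}}{\alpha_{i}}A_{i}^{\mathrm{T}}\otimes A_{i}^{\mathrm{T}})=\rho(\sum_{i=1}^{N}\frac{\tau_{i}^{2}}{\alpha_{i}}A_{i}\otimes A_{i})$, the last equality because transposition preserves the spectral radius and $(A_{i}\otimes A_{i})^{\mathrm{T}}=A_{i}^{\mathrm{T}}\otimes A_{i}^{\mathrm{T}}$. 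Thus condition (\ref{eq74}) is precisely $\rho(\mathscr{G})<1$, the LMI is solvable, and exponential stability follows.

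The computations are routine once the reduction $Q_{i}=\alpha_{i}Q$ is made, so the only point requiring care is the justification that this specialization remains a legitimate sufficient test, which it is because (\ref{eq44}) is an existence condition. A useful sanity check is that the normalization $\sum_{i=1}^{N}\alpha_{i}=1$ is exactly what makes $\sum_{i=1}^{N}Q_{i}=Q$ and lets $P=Q^{-1}$ appear cleanly; under the uniform choice $\alpha_{i}=1/N$ one recovers $\frac{\tau_{i}^{2}}{\alpha_{i}}=N\tau_{i}^{2}$, so (\ref{eq74}) reduces to the condition (\ref{eq43}) of Corollary \ref{coro2}, confirming that the present statement is a genuine generalization obtained by optimizing over the weights $\alpha_{i}$.
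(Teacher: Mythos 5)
Your proposal is correct and follows essentially the same route as the paper: the authors likewise observe that (\ref{eq74}) is equivalent (via the positive operator argument from the proof of Theorem \ref{th2}) to solvability of $\sum_{i=1}^{N}\frac{\tau_{i}^{2}}{\alpha_{i}}A_{i}^{\mathrm{T}}QA_{i}-Q<0$, then substitute $Q_{i}=\alpha_{i}Q$ into (\ref{eq44}) and invoke Theorem \ref{th1}. Your write-up is in fact slightly more careful than the paper's, since you make explicit the inversion $P=Q^{-1}$ that reconciles the single LMI with the form of (\ref{eq44}).
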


\begin{proof}
{According to the proof of Theorem \ref{th2}, (\ref{eq74}) is satisfied
if and only if there exists a $Q>0$ such that
\begin{equation}
\sum \limits_{i=1}^{N}\frac{\tau_{i}^{2}}{\alpha_{i}}A_{i}^{\mathrm{T}}%
QA_{i}-Q<0. \label{eq75}%
\end{equation}
Hence the inequality (\ref{eq44}) is satisfied with
$Q_{i}=\alpha_{i}Q.$ The result then follows from Theorem \ref{th1}}.
\end{proof}

{Clearly, the spectral condition (\ref{eq74}) reduces to (\ref{eq43})
if we set $\alpha_{i}=\frac{1}{N},i\in \mathbf{I}\left[  1,N\right]  .$}

%We can also obtain a spectral radius based stability condition based on the
%LMI in (\ref{LMI}).
%\begin{corollary}
%The IDS (\ref{ids}) is exponentially stable if there exist $N$ positive
%scalars $\theta_{i}>0,i\in \mathbf{I}\left[  1,N\right]  ,$ such that the
%following spectral radius condition is satisfied:
%\begin{equation}
%\rho \left(  \sum \limits_{i=1}^{N}\frac{\tau_{i}^{2}}{\theta_{i}}A_{i}\otimes
%A_{i}\right)  <\frac{1}{\sum \limits_{j=1}^{N}\theta_{j}},\label{rho2}%
%\end{equation}
%which reduces to (\ref{eq43}) if $\theta_{i}=1,i\in \mathbf{I}\left[
%1,N\right]  .$
%\end{corollary}
%\begin{proof}
%Let $Q_{i}=\theta_{i}Q,i\in \mathbf{I}\left[  1,N\right]  $ where $Q_{0}>0$.
%Then the LMI (\ref{eq44}) reduces to%
%\begin{equation}
%\sum \limits_{j=1}^{N}\theta_{j}\sum \limits_{i=1}^{N}\frac{\tau_{i}^{2}}%
%{\theta_{i}}A_{i}^{\mathrm{T}}Q_{0}A_{i}-Q_{0}<0,\label{lmi11}%
%\end{equation}
%which is in the form of (\ref{eq30}). The result then follows from Theorem
%\ref{th2}.
%\end{proof}

\subsection{A Comparison of Different Sufficient Conditions}

With the help of Theorem \ref{th2}, we are able to make a comparison among
these different stability conditions in Lemma \ref{lm1}, Theorem \ref{th3} and
Theorem \ref{th1}.

\begin{proposition}
\label{coro1}The following statements are true.

\begin{enumerate}
\item If the set of LMIs (\ref{LMIamc}) (or (\ref{eq35})) are solvable, then
the set of LMIs in (\ref{LMI1})--(\ref{LMI2}) are also solvable, namely,
Theorem \ref{th3} is always less conservative than Lemma \ref{lm1}.

\item {The set of LMIs in (\ref{LMI1})--(\ref{LMI2}) are solvable if
and only if the LMI in (\ref{LMI}) is solvable. Hence Theorem \ref{th1} is
equivalent to Theorem \ref{th3} and both of them are thus less conservative than Lemma
\ref{lm1}}.
\end{enumerate}
\end{proposition}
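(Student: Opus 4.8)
The plan is to prove the two items in sequence, exploiting the spectral-radius characterization established in Theorem \ref{th2} to convert questions about solvability of coupled LMIs into a single comparison.

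For Item 1, I would argue that solvability of the coupled LMIs (\ref{LMIamc}) implies solvability of (\ref{LMI1})--(\ref{LMI2}). By Theorem \ref{th3} Item 2, the LMIs (\ref{LMI1})--(\ref{LMI2}) are solvable if and only if the nonlinear matrix inequalities (\ref{NMI1})--(\ref{NMI2}) are solvable. So it suffices to exhibit positive definite matrices $S_i,Q_i$ satisfying (\ref{NMI1})--(\ref{NMI2}) starting from a solution of (\ref{LMIamc}). By the equivalence A $\Leftrightarrow$ B in Theorem \ref{th2}, solvability of (\ref{LMIamc}) is equivalent to solvability of (\ref{eq35}). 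Given $Q_i>0$ solving (\ref{eq35}), I would set $S_i \triangleq N\tau_i^2 A_i^{\mathrm{T}}\left(\sum_{j=1}^N Q_j\right)A_i + \epsilon_i I_n$ for suitably small $\epsilon_i>0$; then (\ref{eq35}) forces $S_i < Q_i$, which after rescaling the $Q_i$ should yield both $\tau_i^2 A_i^{\mathrm{T}}Q_i^{-1}A_i < S_i$ (i.e.\ (\ref{NMI1})) and $\sum_i S_i < (\sum_i Q_i)^{-1}$ (i.e.\ (\ref{NMI2})). The bookkeeping on the scaling factor $N$ and the $\tau_i$ is the delicate part, and I expect this to be the main obstacle: one must choose the $S_i$ and a rescaling of the $Q_i$ so that the factor $N$ appearing in (\ref{eq35}) is correctly absorbed into the product $\tau_i^2 A_i^{\mathrm{T}}Q_i^{-1}A_i$ versus the bound $(\sum_i Q_i)^{-1}$, and strict inequalities are preserved under the small perturbations $\epsilon_i$.

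For Item 2, I would show the two LMI systems are in fact equivalent. One direction follows from Theorem \ref{th3} Item 2 together with Theorem \ref{th1} Item 2: the LMIs (\ref{LMI1})--(\ref{LMI2}) are solvable iff (\ref{NMI1})--(\ref{NMI2}) are solvable, while (\ref{LMI}) is solvable iff (\ref{eq44}) is solvable. Thus it suffices to prove that the nonlinear systems (\ref{NMI1})--(\ref{NMI2}) and (\ref{eq44}) are mutually solvable. Given a solution $Q_i$ of (\ref{eq44}), summing the defining bound shows one may take $S_i \triangleq \tau_i^2 A_i^{\mathrm{T}}Q_i^{-1}A_i + \delta_i I_n$ with small $\delta_i>0$; then (\ref{NMI1}) holds by construction, and (\ref{eq44}) gives $\sum_i S_i = \sum_i \tau_i^2 A_i^{\mathrm{T}}Q_i^{-1}A_i + (\sum_i\delta_i)I_n < (\sum_i Q_i)^{-1}$ for the $\delta_i$ small enough, which is (\ref{NMI2}). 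Conversely, given $S_i,Q_i$ solving (\ref{NMI1})--(\ref{NMI2}), I would simply sum the relations $\tau_i^2 A_i^{\mathrm{T}}Q_i^{-1}A_i < S_i$ over $i$ and combine with $\sum_i S_i < (\sum_i Q_i)^{-1}$ to deduce $\sum_i \tau_i^2 A_i^{\mathrm{T}}Q_i^{-1}A_i < (\sum_i Q_i)^{-1}$, which is exactly (\ref{eq44}); the $S_i$ are thereby eliminated. This direction is essentially a telescoping/summation argument and should be routine.

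Finally, the concluding sentence of Item 2---that both theorems are less conservative than Lemma \ref{lm1}---follows by chaining Item 1 with the equivalence just proved. I anticipate no obstacle in the Item 2 equivalence, since it reduces to adding and subtracting inequalities; the genuine work is concentrated in the scaling argument of Item 1, where the asymmetry between the factor $N$ in Lemma \ref{lm1}'s condition and its absence in Theorem \ref{th3} must be reconciled.
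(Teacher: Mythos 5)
Your Item 2 argument is correct and is essentially the paper's own proof: the forward direction is the same summation of (\ref{NMI1}) combined with (\ref{NMI2}), and for the converse the paper takes $S_i=\tau_i^2A_i^{\mathrm{T}}Q_i^{-1}A_i+\Omega$ with $\Omega=\frac{1}{2N}\bigl(\left(\sum_jQ_j\right)^{-1}-\sum_j\tau_j^2A_j^{\mathrm{T}}Q_j^{-1}A_j\bigr)$ in place of your $\delta_iI_n$; both choices distribute the slack of (\ref{eq44}) evenly among the $S_i$ and work equally well.

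The gap is in Item 1, and it sits exactly where you flagged it. Starting from a solution $Q_i$ of (\ref{eq35}) you set $S_i=N\tau_i^2A_i^{\mathrm{T}}\left(\sum_jQ_j\right)A_i+\epsilon_iI_n<Q_i$ and hope that \emph{rescaling the $Q_i$} will produce both $\tau_i^2A_i^{\mathrm{T}}Q_i^{-1}A_i<S_i$ and $\sum_iS_i<\left(\sum_iQ_i\right)^{-1}$. A scalar rescaling $Q_i\rightarrow\lambda Q_i$ cannot do this: it turns (\ref{NMI1}) into $\frac{1}{\lambda}\tau_i^2A_i^{\mathrm{T}}Q_i^{-1}A_i<S_i$, and $A_i^{\mathrm{T}}Q_i^{-1}A_i$ is not comparable in the required direction with the matrix $A_i^{\mathrm{T}}\left(\sum_jQ_j\right)A_i$ appearing inside your $S_i$ (one involves an inverse, the other does not); moreover (\ref{NMI2}) pushes $\lambda$ to be small while (\ref{NMI1}) pushes it to be large. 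The one idea your outline is missing is that the weighting matrices entering (\ref{NMI1})--(\ref{NMI2}) should not be rescaled versions of the $Q_i$ at all, but should all be taken equal to the single matrix $P_i\triangleq\frac{1}{N}\left(\sum_jQ_j\right)^{-1}$. Then $\tau_i^2A_i^{\mathrm{T}}P_i^{-1}A_i=N\tau_i^2A_i^{\mathrm{T}}\left(\sum_jQ_j\right)A_i$ reproduces verbatim the left-hand side of (\ref{eq35}), so (\ref{NMI1}) holds with your $S_i$ (the paper instead takes $S_i=Q_i-\varepsilon I_n$, which works equally well), and $\sum_iP_i=\left(\sum_jQ_j\right)^{-1}$ turns (\ref{NMI2}) into $\sum_iS_i<\sum_jQ_j$, which follows from $S_i<Q_i$. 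With that substitution the rest of your outline goes through and coincides with the paper's argument.
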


\begin{proof}
\textit{Proof of Item 1}. By Theorem \ref{th2}, the set of LMIs (\ref{LMIamc})
are solvable if and only if the set of LMIs (\ref{eq35}) are solvable, which
implies that there exists a sufficiently small number $\varepsilon>0$ such
that $\varepsilon I_{n}<Q_{i},i\in \mathbf{I}\left[  1,N\right]  ,$ and%
\begin{equation}
N\tau_{i}^{2}A_{i}^{\mathrm{T}}\left(  \sum \limits_{j=1}^{N}Q_{j}\right)
A_{i}-Q_{i}+\varepsilon I_{n}<0,\;i\in \mathbf{I}\left[  1,N\right]  .
\label{eq51}%
\end{equation}
Now for every $i\in \mathbf{I}\left[  1,N\right]  $ we choose%
\begin{equation}
P_{i}^{-1}=N%
%TCIMACRO{\dsum \limits_{i=1}^{N}}%
%BeginExpansion
{\displaystyle \sum \limits_{i=1}^{N}}
%EndExpansion
Q_{i}\triangleq NQ,\;R_{i}=Q_{i}-\varepsilon I_{n}>0, \label{eq57}%
\end{equation}
which implies%
\begin{equation}
\sum \limits_{i=1}^{N}P_{i}=\sum \limits_{i=1}^{N}\frac{1}{N}Q^{-1}=Q^{-1}.
\label{eq52}%
\end{equation}
Then it follows from (\ref{eq51}) that, for all $i\in \mathbf{I}\left[
1,N\right]$,
\begin{equation}
\tau_{i}^{2}A_{i}^{\mathrm{T}}P_{i}^{-1}A_{i}-R_{i}=N\tau_{i}^{2}%
A_{i}^{\mathrm{T}}QA_{i}-Q_{i}+\varepsilon I_{n}<0  , \label{eq53}%
\end{equation}
and it follows from (\ref{eq52}) that
\begin{equation}%
%TCIMACRO{\dsum \limits_{i=1}^{N}}%
%BeginExpansion
{\displaystyle \sum \limits_{i=1}^{N}}
%EndExpansion
R_{i}=%
%TCIMACRO{\dsum \limits_{i=1}^{N}}%
%BeginExpansion
{\displaystyle \sum \limits_{i=1}^{N}}
%EndExpansion
\left(  Q_{i}-\varepsilon I_{n}\right)  =Q-N\varepsilon I_{n}<Q=\left(
\sum \limits_{i=1}^{N}P_{i}\right)  ^{-1}. \label{eq56}%
\end{equation}
Clearly, (\ref{eq53}) and (\ref{eq56}) are respectively in the form of
(\ref{NMI1})--(\ref{NMI2}).
%The conclusion then follows from Theorem \ref{th3}.

\textit{Proof of Item 2}. Assume that (\ref{NMI1})--(\ref{NMI2}) are feasible.
Summing the $N$ nonlinear matrix inequalities in (\ref{NMI1}) on both sides
gives%
\begin{equation}%
%TCIMACRO{\dsum \limits_{i=1}^{N}}%
%BeginExpansion
{\displaystyle \sum \limits_{i=1}^{N}}
%EndExpansion
\tau_{i}^{2}A_{i}^{\mathrm{T}}Q_{i}^{-1}A_{i}-%
%TCIMACRO{\dsum \limits_{i=1}^{N}}%
%BeginExpansion
{\displaystyle \sum \limits_{i=1}^{N}}
%EndExpansion
S_{i}<0, \label{eq62}%
\end{equation}
which, by using the nonlinear matrix inequality (\ref{NMI2}), implies%
\begin{equation}%
%TCIMACRO{\dsum \limits_{i=1}^{N}}%
%BeginExpansion
{\displaystyle \sum \limits_{i=1}^{N}}
%EndExpansion
\tau_{i}^{2}A_{i}^{\mathrm{T}}Q_{i}^{-1}A_{i}<\left(
%TCIMACRO{\dsum \limits_{i=1}^{N}}%
%BeginExpansion
{\displaystyle \sum \limits_{i=1}^{N}}
%EndExpansion
Q_{i}\right)  ^{-1}, \label{eq63}%
\end{equation}
which is just in the form of (\ref{eq44}) and is further equivalent to
(\ref{LMI}).

{Now assume that (\ref{eq44}) is feasible. Denote
\begin{equation}
\Omega=\frac{1}{2N}\left(  \left(
%TCIMACRO{\dsum \limits_{i=1}^{N}}%
%BeginExpansion
{\displaystyle \sum \limits_{i=1}^{N}}
%EndExpansion
Q_{i}\right)  ^{-1}-%
%TCIMACRO{\dsum \limits_{i=1}^{N}}%
%BeginExpansion
{\displaystyle \sum \limits_{i=1}^{N}}
%EndExpansion
\tau_{i}^{2}A_{i}^{\mathrm{T}}Q_{i}^{-1}A_{i}\right)  , \label{eq33}%
\end{equation}
and let
\begin{equation}
S_{i}=\tau_{i}^{2}A_{i}^{\mathrm{T}}Q_{i}^{-1}A_{i}+\Omega>0,\;i\in
\mathbf{I}\left[  1,N\right]  . \label{eq36}%
\end{equation}
It follows that (\ref{NMI1}) is satisfied. Now, by (\ref{eq44}), we
have
\[%
%TCIMACRO{\dsum \limits_{i=1}^{N}}%
%BeginExpansion
{\displaystyle \sum \limits_{i=1}^{N}}
%EndExpansion
S_{i}=\frac{1}{2}\left(  \left(
%TCIMACRO{\dsum \limits_{i=1}^{N}}%
%BeginExpansion
{\displaystyle \sum \limits_{i=1}^{N}}
%EndExpansion
Q_{i}\right)  ^{-1}+%
%TCIMACRO{\dsum \limits_{i=1}^{N}}%
%BeginExpansion
{\displaystyle \sum \limits_{i=1}^{N}}
%EndExpansion
\tau_{i}^{2}A_{i}^{\mathrm{T}}Q_{i}^{-1}A_{i}\right)  <\left(
%TCIMACRO{\dsum \limits_{i=1}^{N}}%
%BeginExpansion
{\displaystyle \sum \limits_{i=1}^{N}}
%EndExpansion
Q_{i}\right)  ^{-1}%
\]
which implies that (\ref{NMI2}) is satisfied. The proof is finished.}
\end{proof}

This proposition demonstrates in theory that the multiple Jensen inequality
(\ref{jensen}) used in the proofs of Theorems \ref{th3} and \ref{th1} can
indeed reduce the conservatism of the resulting stability conditions.

\begin{remark}
{Though Theorems \ref{th3} and \ref{th1} are equivalent by Proposition
\ref{coro1}, Theorem \ref{th1} obtained by the novel Lyapunov functionals
(\ref{eqv1}) and (\ref{v2}) possesses an advantage over Theorem \ref{th3}. To
see this, we notice that the total row size (denoted by $\Phi$) and
the total number of scalar decision variables (denoted by $\Psi$) in
the LMIs of Theorems \ref{th3} and \ref{th1} are, respectively, given by
\begin{equation}
\left \{
\begin{array}
[c]{ll}%
\Phi_{\mathrm{Th.1}}=n+2n,\; & \Psi_{\mathrm{Th.1}}=n\left(  n+1\right)
N+n^{2},\\
\Phi_{\mathrm{Th.2}}=nN,\; & \Psi_{\mathrm{Th.2}}=\frac{n\left(  n+1\right)
}{2}N.
\end{array}
\right.  \label{eq72}%
\end{equation}
It is well known that the computational complexity of an LMI is
bounded by $\mu \Phi \Psi^{3}$ where $\mu$ is a constant (see
\cite{zzd11auto}). Hence the computation complexity of the LMIs in Theorem
\ref{th1} is significantly lower than that in Theorem \ref{th3}}.
\end{remark}

Combining Lemma \ref{lm4}, Proposition \ref{coro1} and Theorem \ref{th2}
gives the following corollary.

\begin{corollary}
The IDS (\ref{idslaa}) is exponentially stable independent of the delays
$\tau_{i},i\in \mathbf{I}\left[  1,N\right]  ,$ if $\rho \left(
%TCIMACRO{\tsum \nolimits_{i=1}^{N}}%
%BeginExpansion
{\textstyle \sum \nolimits_{i=1}^{N}}
%EndExpansion
A_{i}\otimes A_{i}\right)  <\frac{1}{N}.$
\end{corollary}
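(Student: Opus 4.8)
The plan is to exploit the structural identity between the LMI (\ref{LMI}), which governs the stability of system (\ref{ids}) via Theorem \ref{th1}, and the LMI (\ref{lmilaa}), which governs the stability of system (\ref{idslaa}) via Lemma \ref{lm4}: the latter is obtained from the former simply by setting $\tau_i=1$ for all $i\in\mathbf{I}[1,N]$. Because the developments in Theorem \ref{th2} and Proposition \ref{coro1} hold for arbitrary positive delays, they apply verbatim to the choice $\tau_i=1$, and it is precisely this specialization that converts the spectral radius condition (\ref{eq43}) into the hypothesis of the corollary.

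Concretely, I would first assemble the chain of implications already available in the excerpt. By the equivalence B$\Leftrightarrow$C of Theorem \ref{th2}, the condition $\rho(\sum_{i=1}^{N}\tau_i^2 A_i\otimes A_i)<1/N$ appearing in (\ref{eq43}) is equivalent to the solvability of the coupled LMIs (\ref{eq35}). By Item~1 of Proposition \ref{coro1}, solvability of (\ref{eq35}) implies solvability of the LMIs (\ref{LMI1})--(\ref{LMI2}), and by Item~2 of the same proposition these are feasible if and only if the single LMI (\ref{LMI}) is feasible. Composing these steps yields the implication that (\ref{eq43}) forces the feasibility of (\ref{LMI}).

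The second step is to specialize to $\tau_i=1$. Under this choice the spectral radius condition (\ref{eq43}) reads exactly $\rho(\sum_{i=1}^{N}A_i\otimes A_i)<1/N$, the hypothesis of the corollary, while the LMI (\ref{LMI}) collapses precisely to the LMI (\ref{lmilaa}), since the two block columns differ only by the scalar factors $\tau_i$, which now equal $1$. Hence the assumed spectral bound guarantees that (\ref{lmilaa}) admits positive definite solutions $Q_i$, and invoking Lemma \ref{lm4} then gives that the IDS (\ref{idslaa}) is exponentially stable independent of the delays, as claimed.

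The routine part is the bookkeeping of which delay-dependent statement reduces to which delay-independent one; the only point genuinely deserving attention is to confirm that every result borrowed from Theorem \ref{th2} and Proposition \ref{coro1} really does hold for the particular delay values $\tau_i=1$, and that no intermediate step covertly used the delays being distinct or having a specific magnitude. Since those results are proved for arbitrary positive $\tau_i$ and the underlying spectral and LMI equivalences are purely algebraic, this check is immediate, so I anticipate no substantive obstacle beyond keeping the correspondence between the two families of LMIs straight.
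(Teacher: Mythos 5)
Your proposal is correct and follows exactly the route the paper intends: the paper's entire ``proof'' is the one-line remark that the corollary follows by combining Lemma \ref{lm4}, Proposition \ref{coro1} and Theorem \ref{th2}, and your chain (\ref{eq43}) $\Leftrightarrow$ (\ref{eq35}) $\Rightarrow$ (\ref{LMI1})--(\ref{LMI2}) $\Leftrightarrow$ (\ref{LMI}), specialized to $\tau_i=1$ so that (\ref{LMI}) becomes (\ref{lmilaa}), is precisely that combination spelled out. The check that the intermediate algebraic equivalences hold for the particular choice $\tau_i=1$ is the right thing to verify and is indeed immediate.
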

\vspace{-0.3cm}
{\begin{center}
{\normalsize Table 1: The maximal allowable $\tau_{2}$ by using different
approaches\vspace{0.3cm}
\begin{tabular}
[c]{l|cccc}\hline \hline
& LMI (\ref{LMI}) & LMIs (\ref{LMIamc}) & LMI (\ref{eq30}) & Condition
(\ref{eq43})\\ \hline
$\tau_{1}=0.4$ & 0.0317 & infeasible & infeasible & infeasible\\
\multicolumn{1}{c|}{$\tau_{1}=0.3$} & 0.1146 & 0.0474 & 0.0474 & 0.0474\\
\multicolumn{1}{c|}{$\tau_{1}=0.2$} & 0.2418 & 0.1527 & 0.1527 & 0.1527\\
\multicolumn{1}{c|}{$\tau_{1}=0.1$} & 0.4882 & 0.3414 & 0.3414 &
0.3414\\ \hline \hline
\end{tabular}
}
\end{center}}

\section{\label{sec4}A Numerical Example}

We consider an IDS in the form of (\ref{ids}) with two delays, namely,%
\begin{equation}
x\left(  t\right)  =A_{1}\int_{-\tau_{1}}^{0}x\left(  t+s\right)
\mathrm{d}s+A_{2}\int_{-\tau_{2}}^{0}x\left(  t+s\right)  \mathrm{d}s,
\label{sys2}%
\end{equation}
where%
\begin{equation}
A_{1}=\left[
\begin{array}
[c]{cc}%
-4 & 1\\
-13 & 2
\end{array}
\right]  ,\;A_{2}=\left[
\begin{array}
[c]{cc}%
0 & -1\\
1 & 0
\end{array}
\right]  . \label{eqa1a2}%
\end{equation}
If $A_{2}=0,$ this system has been studied in \cite{Aguliar10AMC}. It is shown
there that the LMIs in Lemma \ref{lm1} are feasible if and only if $0\leq
\tau_{1}\leq0.4473=\tau_{1}^{\ast}.$ On the other hand, direct computation
gives $\rho \left(  \tau_{1}^{\ast}A_{1}\right)  =0.9999,$ which clearly
validates Corollary \ref{coro2}.

{For a fixed $\tau_{1},$ the maximal value of $\tau_{2}$ such that the LMI in
(\ref{LMI}), the LMIs in (\ref{LMIamc}), the LMI in (\ref{eq30}), and the
spectral condition (\ref{eq43}) are feasible can be respectively computed by a
bisection method. The results are recorded in Table 1. From this table we can
observe that Theorem \ref{th1} is always less
conservative than Lemma \ref{lm1} established in \cite{Aguliar10AMC}, which
indicates that our approach based on the multiple Jensen inequality can
considerably reduce the conservatism in the stability analysis of this class
of IDSs. Moreover, the LMIs (\ref{LMIamc}), the LMI (\ref{eq30}) and the
spectral condition (\ref{eq43}) lead to the same result, which validates
Theorem \ref{th2}.  We mention that the results obtained by Theorem \ref{th3} are the same as
that by Theorem \ref{th1}. However, from the computational point of view, Theorem \ref{th2} is recommended to use in practice as it only involves one
constraint and a single decision variable. }

{To illustrate Corollary \ref{coro4}, let $\tau_{1}=0.4$ and
$\tau_{2}=0.02.$ From Table 1 we can see that (\ref{eq43}) is not
satisfied. Since $\left \Vert A_{1}\right \Vert \gg \left \Vert A_{2}\right \Vert
,$ we may let the weighting factor $\frac{1}{\alpha_{1}}$ of
$\left(  \tau_{1}A_{1}\right)  \otimes \left(  \tau_{1}A_{1}\right)  $
be small enough so that the spectral radius of the resulting matrix $%
%TCIMACRO{\tsum \nolimits_{i=1}^{2}}%
%BeginExpansion
{\textstyle \sum \nolimits_{i=1}^{2}}
%EndExpansion
\frac{1}{\alpha_{i}}\left(  \tau_{i}A_{i}\right)  \otimes \left(  \tau_{i}%
A_{i}\right)  $ is less than one. Indeed, if we choose $\alpha
_{1}=0.9$ and $\alpha_{2}=0.1$ we can compute $\rho(%
%TCIMACRO{\tsum \nolimits_{i=1}^{2}}%
%BeginExpansion
{\textstyle \sum \nolimits_{i=1}^{2}}
%EndExpansion
\frac{1}{\alpha_{i}}\left(  \tau_{i}A_{i}\right)  \otimes \left(  \tau_{i}%
A_{i}\right)  )=0.9783,$ which implies the asymptotic stability
of IDS (\ref{sys2}) in this case.}

{We finally mention that the conclusions obtained in the above have
been approved by many other randomly chosen numerical examples that are not
included here to save spaces.}

\vspace{-0.15cm}
\section{\label{sec5}Conclusion}
This note has studied the stability analysis of a class of integral delay
systems (IDSs) with multiple delays, which have wide applications in the
stability analysis of time-delay systems, especially for neutral time-delay
systems. By generalizing the well-known Jensen inequality to the case with
multiple terms through introducing multiple weighting matrices, two Lyapunov
functional based approaches have been established to yield set of sufficient
stability conditions. Moreover, it is
shown by the positive operator theory that the obtained new conditions are
always less conservative than the existing ones and a spectral radius based
sufficient condition is obtained simultaneously. A numerical example has
demonstrated the effectiveness of the established approaches.
%We finally
%mention that the multiple Jensen inequality in Lemma \ref{lmmjensen} and the
%linearization technique implied by Lemma \ref{lm2} and the proof of Item 2 of
%Theorem \ref{th3} may find other important applications in control theory.

\end{document}